\newcommand{\optval}[1]{\mathcal{V}^*_{#1}} 
\newcommand*\bigcdot{\mathpalette\bigcdot@{.5}}
\newcommand*\bigcdot@[2]{\mathbin{\vcenter{\hbox{\scalebox{#2}{$\m@th#1\bullet$}}}}}
\newcommand{\defeq}{:=}
\DeclareMathOperator{\Tr}{Tr}
\DeclareMathOperator{\rank}{rank}
\numberwithin{equation}{section}
\theoremstyle{theorem} 
   \newtheorem{theorem}{Theorem}[section]
   \newtheorem{remark}[theorem]{Remark}
   \newtheorem{lemma}[theorem]{Lemma}
   \newtheorem{proposition}[theorem]{Proposition}
   \newtheorem{assumption}[theorem]{Assumption}
   \newtheorem{definition}[theorem]{Definition}
\newcommand{\firstRevisionColor}{\color{black}}
\newcommand{\SecondRevisionColor}{\color{black}}
\newcommand{\ThirdRevisionColor}{\color{black}}
\newcommand{\revisionColor}{\color{black}}
\title{An Optimality Gap Test for a Semidefinite Relaxation of a Quadratic Program with Two Quadratic Constraints\thanks{AFOSR grants FA 9550-14-1-0398 and FA9550-19-1-0315, and NSF grant ECCS~1408320.}}	
\author{Sheng Cheng \and Nuno C. Martins\thanks{Department of Electrical and Computer Engineering and the Institute for Systems Research, University of Maryland, College Park, MD, USA ({cheng@terpmail.umd.edu}, {nmartins@umd.edu}).}}	
\date{}
\begin{document}
	\maketitle
	
	\begin{abstract}
		We propose a necessary and sufficient test to determine whether a solution for a general quadratic program with two quadratic constraints (QC2QP) can be computed from that of a specific convex semidefinite relaxation, in which case we say that there is no optimality gap. Originally intended to solve a nonconvex optimal control problem, we consider the case in which the cost and both constraints of the QC2QP may be nonconvex. We obtained our test, which also ascertains when strong duality holds, by generalizing a closely-related method by {Ai~and~Zhang}. An extension was necessary because, while the method proposed by {Ai~and~Zhang} also allows for two quadratic constraints, it requires that at least one is strictly convex. In order to illustrate the usefulness of our test, we applied it to two examples that do not satisfy the assumptions required by prior methods. Our test guarantees that there is no optimality gap for the first example---a solution is also computed from the relaxation---and we used it to establish that an optimality gap exists in the second. 
	\end{abstract}
	
	\providecommand{\keywords}[1]	
	{	
		\small		
		\textbf{\textit{Keywords. }} #1	
	}	
	\keywords{quadratically constrained quadratic program, semidefinite relaxation, strong duality, nonconvex optimization}	
	
	\providecommand{\AMS}[1]	
	{	
		\small		
		\textbf{\textit{AMS subject classifications. }} #1	
	}	
	\AMS{90C26, 90C46}	
	
	
	\section{Introduction}
	
	We consider the following real-valued quadratic program with two quadratic constraints (QC2QP):
	\begin{equation}
	\begin{aligned}
	& \underset{\boldsymbol{z} \in \mathbb{R}^n}{\text{minimize}}
	& & q_0(\boldsymbol{z}) = \boldsymbol{z}^T Q_0\boldsymbol{z} +2b_0^T\boldsymbol{z} \\
	& \text{subject to}
	& & q_1(\boldsymbol{z}) =   \boldsymbol{z}^T Q_1\boldsymbol{z} +2b_1^T\boldsymbol{z} + c_1 \leq 0,\\
	&
	& & q_2(\boldsymbol{z}) =   \boldsymbol{z}^T Q_2\boldsymbol{z} +2b_2^T\boldsymbol{z} + c_2 \leq 0,\\
	\end{aligned}
	\tag{QP0}\label{prob: very original problem}
	\end{equation}
	where $Q_0$, $Q_1$, and $Q_2$ are $n\times n$-dimensional real symmetric matrices; $b_0$, $b_1$ and $b_2$ are $n$-dimensional real vectors; and $c_1$ and $c_2$ are real constants. 
	
	\vspace{.1 in} 
	\noindent{\bf Main problem:} We seek to solve \eqref{prob: very original problem} without positive semidefiniteness restrictions on $Q_0, Q_1$, and $Q_2$, which, generally, makes the problem nonconvex. 
	\subsection{Brief overview of existing related work}
	Existing work explored two distinct approaches to obtain a globally optimal solution to \eqref{prob: very original problem}. The first approach, which we adopt to develop our method, uses a semidefinite relaxation of \eqref{prob: very original problem} whose (Lagrange) dual is convex and identical to that of \eqref{prob: very original problem}.  The second approach seeks to exploit the structure of the QC2QP, possibly subject to additional restrictions, to characterize globally optimal solutions in a way that numerically tractable methods can be used. Subsequently, we proceed to describe previous work on both approaches.
	
	Following the first approach, {Ai~and~Zhang}~\cite{ai2009strong} introduce a necessary and sufficient condition for strong duality for the Celis-Dennis-Tapia (CDT) subproblem of minimizing a nonconvex quadratic cost over the intersection of an ellipsoid and an elliptical cylinder~\cite{celis1985trust} (corresponding to $Q_1$ being positive definite and $Q_2$ being positive semidefinite, respectively, in \eqref{prob: very original problem}), which is a special case of QC2QP used in the extended trust region method \cite{ye2003new}. Their result shows that strong duality holds, and a primal optimal solution can be obtained from a semidefinite relaxation, if and only if optimal solutions of the dual and the relaxation violate the so-called \textit{Property~I} comprising three algebraic conditions. Subsequent work by Yuan~et~al. in~\cite{yuan2017new} shows that adding second-order cone (SOC) constraints to a CDT subproblem for which \textit{Property~I} holds may narrow or even eliminate the duality gap. In the latter case, a globally optimal solution to the original problem can be computed from a solution of the semidefinite relaxation with an SOC reformulation.
	
	Another relaxation technique is to solve the QC2QP in the complex domain. In~\cite{beck2006strong},  Beck~and~Eldar use such a methodology to introduce a necessary and sufficient condition for strong duality, using the classical extended S-Lemma of Fradkov and Yakubovich~\cite{fradkov1973s}. If strong duality holds, a globally optimal solution to the original problem can be obtained by solving a quadratic feasibility problem. Using the necessary and sufficient condition and the convexity of a quadratic mapping, they subsequently prove a sufficient condition for strong duality for the real-valued QC2QP. Huang and Zhang \cite{huang2007complex} propose a sufficient condition for strong duality in the complex-valued problem in which a globally optimal solution to the original problem can be obtained from a semidefinite relaxation if strong duality holds. Their result is derived using a matrix rank-one decomposition for complex Hermitian matrices. 
	
	Following the second approach, Peng and Yuan \cite{peng1997optimality} prove a necessary condition for global optimality in QC2QP. Specifically, the number of negative eigenvalues of the Hessian of the Lagrangian is characterized at a globally optimal solution. For the CDT subproblem, Bomze and Overton \cite{bomze2015narrowing} prove necessary and sufficient conditions for global and local optimality using copositivity. 
	
	
	\subsection{Our main contribution}
	We seek to use a specific semidefinite relaxation to find a solution for \eqref{prob: very original problem} for the case in which there are no positive semidefiniteness restrictions on $Q_0, Q_1$, and $Q_2$.
	When a solution for \eqref{prob: very original problem} can be determined from that of the relaxation we say that there is {\it no optimality gap}.  The relaxation is cast as a convex semidefinite program (SDP) for which an optimal solution can be determined efficiently using existing software. The dual of the semidefinite relaxation is convex and is also the dual of \eqref{prob: very original problem}. This motivates the analysis in \cref{chap: main results}, where we propose the so-called \textit{Property~I$^+$} defined by four algebraic conditions that determine, based on solutions of the relaxation and its dual, when an optimality gap exists. 	Our main result is \Cref{thm: necessary and sufficient condition for strong duality}, which states precisely a necessary and sufficient condition for the existence of an optimality gap based on \textit{Property~I$^+$}. As we discuss in detail in \cref{chap: comparison}, \Cref{thm: necessary and sufficient condition for strong duality} extends the closely-related result of~\cite{ai2009strong} in the following ways:
	
	\begin{itemize}
		\item The assumption in~\cite{ai2009strong} that either $Q_1$ or $Q_2$ must be positive definite is replaced in our work by the weaker requirement that the dual of  \eqref{prob: very original problem} satisfies Slater's condition.
		\item In the particular case when $Q_1$, or $Q_2$, is positive definite the above-mentioned \textit{Property~I$^+$} is equivalent to \textit{Property~I} used in~\cite{ai2009strong} to determine when there is an optimality gap. Hence, our work presents no advantage relative to~\cite{ai2009strong} when $Q_1$, or $Q_2$, is positive definite.
	\end{itemize}
	
	A nonconvex optimal control problem studied by Cheng and Martins in~\cite{chengmartins2018reaching} motivated the unexampled QC2QP considered here, in which neither $Q_1$ nor $Q_2$ is assumed positive definite. 
	
	
	\subsection{Structure of the article}	
	We start with reviewing in \cref{chap: preliminary results} the key results of~\cite{ai2009strong}. In doing so, we also present the essential concepts used in~\cite{ai2009strong}, which include the semidefinite relaxation used here. We define \textit{Property~I$^+$} and subsequently state our main result (\Cref{thm: necessary and sufficient condition for strong duality}) in \cref{chap: main results}.
	In \cref{chap: test}, we describe an algorithm to implement the test of \Cref{thm: necessary and sufficient condition for strong duality}, and we also discuss relevant numerical considerations. In addition, in~\cref{chap: test}, we apply our algorithm to two QC2QP examples that do not satisfy the assumptions required by previous methods. More specifically, we compute the optimal solution for the first example after we establish that it has no optimality gap. In contrast, we establish that there is an optimality gap for the second. 
	In \cref{chap: comparison}, we explain why \Cref{thm: necessary and sufficient condition for strong duality} extends the closely-related result of~\cite{ai2009strong}. {\firstRevisionColor In \cref{sec: conclusions and future work}, we summarize the paper and discuss future research directions that could leverage our results, in concert with related recent work, to obtain methods that would not only be numerically more robust, but also more general.} In \cref{sec: proof}, we present a detailed proof of~\Cref{thm: necessary and sufficient condition for strong duality}. {\firstRevisionColor The necessity portion of the proof embeds the description of a method to construct a solution to (QP0) from that of its convex relaxation, for the case in which the test in~\Cref{thm: necessary and sufficient condition for strong duality} guarantees that there is no optimality gap.}
	
	\subsection{Notation and conventions}
	
	Throughout the paper, we adopt the following notation, which is mostly borrowed from~\cite{ai2009strong}: We denote the set of real numbers with $\mathbb{R}$. We use $\mathcal{S}^n$ to denote the set of symmetric matrices in $\mathbb{R}^{n \times n}$. We use the dot notation to denote the matrix inner product, that is, $A \bigcdot B \defeq \Tr(AB^T) $ for $A,B \in \mathbb{R}^{n \times n}$, where $\Tr(AB^T)$ denotes the trace of $AB^T$. We use $\det(C)$ to denote the determinant of a square matrix $C$. We use $\rank(D)$ and $\rank(D,\epsilon)$ to denote the rank and the numerical rank with tolerance $\epsilon$, respectively, of a matrix $D$. A positive (semi)definite matrix $M$ is denoted by $M \succ (\succeq) 0$. We use $0_{n \times m}$ to denote a matrix in $\mathbb{R}^{n \times m}$ with all entries being $0$ and $I_n$ to denote an $n \times n$-dimensional identity matrix. A diagonal matrix is denoted by $\text{diag}(a_1,a_2,\dots,a_n)$, where $a_1,a_2,\dots,a_n \in \mathbb{R}$ are the diagonal entries. The null space of a linear mapping $L: \mathbb{V} \rightarrow \mathbb{W}$ between two vector spaces $\mathbb{V}$ and $\mathbb{W}$ is denoted by $\mathcal{N}(L)$. We use $|a|$ to denote the absolute value of a real-valued constant or variable $a$. We use the term \textit{polynomial time}, which is defined in \cite{ye2003new}, to indicate that the total number of basic operations (for example, addition, subtraction, multiplication, division, and comparison) of a procedure is bounded by a polynomial of the problem data. We use boldface font, such as in $\boldsymbol{x}$, to represent 
	the optimization variables with respect to which we seek to minimize a cost subject to constraints. We adopt the following format to represent an optimization problem over a subset $\mathcal{X}$ of a real coordinate space, in which we seek to minimize a cost $f:\mathcal{X} \rightarrow \mathbb{R}$ subject to an additional constraint set $\mathcal{C}$: 
	\begin{equation}
	\begin{aligned}
	& \underset{\boldsymbol{x} \in \mathcal{X}}{\text{minimize}}
	& & f(\boldsymbol{x})\\
	& \text{subject to}
	& & \boldsymbol{x} \in \mathcal{C}.
	\end{aligned}
	\tag{P}\label{prob: notation problem}
	\end{equation}
	We use $\optval{\eqref{prob: notation problem}}$ to denote the optimal value of~\eqref{prob: notation problem}. As a convention, a matrix $X$ is rank-one decomposable \cite{ai2009strong} at $x_1$ if there exist other $r-1$ vectors $x_2,\dots,x_r$ such that $X = x_1x_1^T + x_2x_2^T + \dots +x_r x_r^T$, where $r \defeq \rank(X)$.
	
	\section{Preliminary results and concepts}\label{chap: preliminary results}
	
	We start with introducing assumptions and reviewing the key results in \cite{ai2009strong}. For the reader's convenience, we follow the notation in \cite{ai2009strong} and rewrite \eqref{prob: very original problem} in a homogeneous quadratic form:
	\begin{equation}
	\begin{aligned}
	& \underset{\boldsymbol{z} \in \mathbb{R}^n, \boldsymbol{t} \in \mathbb{R}}{\text{minimize}}
	& & M(q_0) \bigcdot \begin{bmatrix}
	\boldsymbol{t} \\ 
	\boldsymbol{z}
	\end{bmatrix} 
	\begin{bmatrix}
	\boldsymbol{t} \\ 
	\boldsymbol{z}
	\end{bmatrix}^T =  \boldsymbol{z}^T Q_0\boldsymbol{z} +2\boldsymbol{t}b_0^T\boldsymbol{z}\\
	& \text{subject to}
	& & M(q_i) \bigcdot  \begin{bmatrix}
	\boldsymbol{t} \\ 
	\boldsymbol{z}
	\end{bmatrix} 
	\begin{bmatrix}
	\boldsymbol{t} \\ 
	\boldsymbol{z}
	\end{bmatrix}^T =  \boldsymbol{z}^T Q_i\boldsymbol{z} +2\boldsymbol{t}b_i^T\boldsymbol{z} + \boldsymbol{t}^2 c_i \leq 0, \qquad i \in \{1,2\},\\
	&
	&& \boldsymbol{t}^2 = 1,
	\end{aligned}
	\tag{QP}\label{prob: original  problem}
	\end{equation}
	where 
	\begin{equation}\label{eq: problem data}
	M(q_0) = \begin{bmatrix}
	0 & b_0^T \\
	b_0 & Q_0
	\end{bmatrix}, \quad M(q_i) = \begin{bmatrix}
	c_i & b_i^T \\
	b_i & Q_i
	\end{bmatrix}, \quad i \in  \{1,2\}.
	\end{equation}
	In the rest of the paper, we use $x$ to represent an $(n+1)$-dimensional vector concatenating a scalar $t$ and an $n$-dimensional vector $z$ as follows
	\begin{equation}\label{eq: partition of x}
	x = \begin{bmatrix}
	t \\ z
	\end{bmatrix}.
	\end{equation}
	
	The semidefinite relaxation of \eqref{prob: original  problem} is the following: 
	\begin{equation}
	\begin{aligned}
	& \underset{\boldsymbol{X} \in \mathcal{S}^{n+1}}{\text{minimize}}
	& & M(q_0) \bigcdot \boldsymbol{X} \\
	& \text{subject to}
	& & M(q_i) \bigcdot \boldsymbol{X} \leq 0, \qquad i \in \{1,2\},\\
	&
	&& I_{00} \bigcdot \boldsymbol{X} = 1,\\
	&
	&& \boldsymbol{X} \succeq 0,
	\end{aligned}
	\tag{SP}\label{prob: semidefinite relaxation}
	\end{equation}
	where $I_{00} = \bigl[\begin{smallmatrix}
	1 & 0_{1 \times n}\\
	0_{n \times 1} & 0_{n \times n}
	\end{smallmatrix} \bigr]$.
	
	The dual problem of \eqref{prob: semidefinite relaxation} is the following:
	\begin{equation}
	\begin{aligned}
	& \underset{\boldsymbol{Z} \in \mathcal{S}^{n+1},\boldsymbol{y}_0,\boldsymbol{y}_1,\boldsymbol{y}_2 \in \mathbb{R}}{\text{maximize}}
	& & \boldsymbol{y}_0 \\
	&  \ \ \ \ \text{subject to}
	&& \boldsymbol{y}_0 I_{00} - \boldsymbol{y}_1 M(q_1) - \boldsymbol{y}_2 M(q_2) + \boldsymbol{Z} = M(q_0) ,\\
	&
	&& \boldsymbol{y}_i \geq 0, \qquad i \in \{1,2 \}, \\
	&
	&& \boldsymbol{Z} \succeq 0.
	\end{aligned}
	\tag{SD}\label{prob: dual problem}
	\end{equation}
	Note that \eqref{prob: dual problem} is also the dual of \eqref{prob: original  problem}. 
	
	\begin{assumption}\label{asmp: Slater for SP}
		Problem \eqref{prob: semidefinite relaxation} satisfies Slater's condition, that is, there exists a symmetric positive definite $(n+1) \times (n+1)$-dimensional real matrix $X$ such that
		{
			\begin{subequations}
				\begin{align}
				M(q_i) \bigcdot X & <0, \qquad i \in \{1,2 \},\\
				I_{00} \bigcdot X & = 1.
				\end{align}
		\end{subequations}}
	\end{assumption}
	
	\begin{remark}
		\Cref{asmp: Slater for SP} holds when Slater's condition holds for \eqref{prob: very original problem} \cite{ai2009strong} (and, hence, for \eqref{prob: original  problem}), that is, there exists an $n$-dimensional vector $z$ such that
		{
			\begin{equation}\label{eq: Slater for SP}
			z^T Q_i z + 2 b_i^T z + c_i < 0, \qquad i \in \{1,2\}.
			\end{equation}}
	\end{remark}
	
	\begin{assumption}\label{asmp: Slater for SD}
		Slater's condition holds for \eqref{prob: dual problem}, that is, there exist a scalar $y_0$ and positive scalars $y_1$ and $y_2$ such that
		{
			\begin{equation}\label{eq: Slater for SD}
			{M(q_0)-y_0 I_{00} + y_1 M(q_1) + y_2 M(q_2) \succ 0.}
			\end{equation}}
	\end{assumption}
	
	\begin{remark}
		For problem data $M(q_0), M(q_1),$ and $M(q_2)$, one can numerically check whether \Cref{asmp: Slater for SP} and \cref{asmp: Slater for SD} are met by solving the feasibility problem of \eqref{prob: semidefinite relaxation} and \eqref{prob: dual problem}, respectively, using an SDP solver.
	\end{remark}
	
	\begin{remark}\label{remark: equivalent condition of Slater's for SD}
		The inequality \eqref{eq: Slater for SD} holds, by Schur complement, if and only if there exist a scalar $y_0$ and positive scalars $y_1$ and $y_2$ such that
		{
			\begin{subequations}\label{eq: Slater's condition for SD}
				\begin{align}
				Q_0 + y_1 Q_1 + y_2 Q_2 & \succ 0,\label{eq: Slater's condition for SD 1}\\
				\xi(y_1,y_2)^T (Q_0 + y_1 Q_1 + y_2 Q_2)^{-1} \xi(y_1,y_2) & < -y_0+y_1 c_1 + y_2 c_2,
				\end{align}
		\end{subequations}}
		where $\xi(y_1,y_2) \defeq b_0 + y_1 b_1 + y_2 b_2$.
	\end{remark} 
	
	\begin{remark}\label{remark: Ai and Zhang's special case of assumption}
		By \cite[Proposition~2.1]{ye2003new}, \Cref{asmp: Slater for SD} holds if the objective function of \eqref{prob: very original problem} is strictly convex (that is, $Q_0$ is positive definite), or at least one of the constraints of \eqref{prob: very original problem} is elliptical (that is, $Q_i \succ 0$ and $b_i^T Q_i^{-1} b_i -c_i >0$ for $i$ being $1$ or $2$, or both). The latter condition, which implies \eqref{eq: Slater for SD} by \Cref{remark: equivalent condition of Slater's for SD}, is required by Ai~and~Zhang in \cite{ai2009strong} which ensures that Slater's condition holds for \eqref{prob: dual problem}.
	\end{remark}

	\begin{remark}\label{remark: tight relaxation and strong duality}
		\Cref{asmp: Slater for SP} and \Cref{asmp: Slater for SD} together imply that both \eqref{prob: semidefinite relaxation} and \eqref{prob: dual problem} have attainable optimal solutions that yield an identical optimal value. This, in turn, implies that \eqref{prob: semidefinite relaxation} is a tight relaxation of \eqref{prob: original  problem} (that is, the optimal value of \eqref{prob: semidefinite relaxation} is identical to that of \eqref{prob: original  problem}) if and only if strong duality holds for \eqref{prob: original  problem}. This observation will be relevant later on when we establish our optimality gap test.
	\end{remark}
	
	We denote optimal solutions of \eqref{prob: original  problem}, \eqref{prob: semidefinite relaxation}, and \eqref{prob: dual problem}, respectively, by $x^*, \hat{X},$ and $(\hat{Z},\hat{y}_0,\hat{y}_1,\hat{y}_2)$, and their optimal values, respectively, by $\optval{\eqref{prob: original  problem}}$, $\optval{\eqref{prob: semidefinite relaxation}}$, and $\optval{\eqref{prob: dual problem}}$. Note that a primal-dual feasible pair, $X$ and $(Z,y_0,y_1,y_2)$, is optimal if and only if it satisfies the complementary conditions:
	\begin{subequations}\label{eq: complementary condition}
		\begin{align}
		XZ & = 0_{(n+1)\times (n+1)}, \label{eq: complementary condition-XZ} \\ \label{eq: complementary condition-q1}
		y_1 M(q_1) \bigcdot X & = 0,\\ \label{eq: complementary condition-q2}
		y_2 M(q_2) \bigcdot X & = 0.
		\end{align}
	\end{subequations}

	\textit{Property~I}, which is defined below (\Cref{def: Wenbao's Property I}), is the key to the necessary and sufficient conditions for an optimality gap (or, equivalently, for a duality gap between \eqref{prob: original  problem} and \eqref{prob: dual problem}) stated in \Cref{thm: Wenbao's theorem} for when $Q_1$ is positive definite.
	{\SecondRevisionColor
		\begin{definition}[\mbox{\cite[Definition~4.1]{ai2009strong}}]\label{def: Wenbao's Property I}
			For $\hat{X}$ and $(\hat{Z},\hat{y}_0,\hat{y}_1,\hat{y}_2)$, a given pair of optimal solutions for \eqref{prob: semidefinite relaxation} and \eqref{prob: dual problem}, respectively, we say that this pair has \textit{Property~I} if all the conditions~(\textit{I.1})--(\textit{I.4}) specified below hold:
			\begin{itemize}
				\item[(\textit{I.1})] $\hat{y}_1\hat{y}_2 \neq 0$;
				\item[(\textit{I.2})] $\rank(\hat{Z}) = n-1$;
				\item[(\textit{I.3})] $\rank(\hat{X}) = 2$;
				\item[(\textit{I.4})] There is a rank-one decomposition of $\hat{X}$, of the form $\hat{X} = \hat{x}_1\hat{x}_1^T + \hat{x}_2\hat{x}_2^T$, for which the following hold: 
				\begin{align} 
				M(q_1) \bigcdot \hat{x}_i\hat{x}_i^T & = 0, \qquad i \in \{1,2\},\label{eq: equality on the first constraint}\\
				(M(q_2) \bigcdot \hat{x}_1\hat{x}_1^T)(M(q_2) \bigcdot \hat{x}_2\hat{x}_2^T)& <0.\label{eq: inequality on the second constraint}
				\end{align}
			\end{itemize}
	\end{definition} }
	
	\begin{theorem}[\mbox{\cite[Theorem~4.2]{ai2009strong}}]\label{thm: Wenbao's theorem}
		Consider \eqref{prob: original  problem} where Slater's condition is satisfied and $Q_1$ is positive definite. {\ThirdRevisionColor Denote with $\mathfrak{P}$ the set of all pairs $\hat{X}'$ and $(\hat{Z}',\hat{y}_0',\hat{y}_1',\hat{y}_2')$ of optimal solutions for the semidefinite relaxation \eqref{prob: semidefinite relaxation} and the dual problem \eqref{prob: dual problem}, respectively. For any given pair $\hat{X}$ and $(\hat{Z},\hat{y}_0,\hat{y}_1,\hat{y}_2)$ arbitrarily selected from $\mathfrak{P}$, } $\optval{\eqref{prob: semidefinite relaxation}}< \optval{\eqref{prob: original  problem}}$ holds if and only if the pair satisfies \textit{Property~I}.
	\end{theorem}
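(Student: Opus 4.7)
The plan is to recast the theorem as an equivalence about rank-one optimal primal solutions of the semidefinite relaxation. When Slater's condition holds for \eqref{prob: very original problem} and $Q_1 \succ 0$, \Cref{remark: Ai and Zhang's special case of assumption} guarantees \Cref{asmp: Slater for SP} and \Cref{asmp: Slater for SD}, so by \Cref{remark: tight relaxation and strong duality} both \eqref{prob: semidefinite relaxation} and \eqref{prob: dual problem} attain optima of equal value. Hence $\optval{\eqref{prob: semidefinite relaxation}} < \optval{\eqref{prob: original  problem}}$ is equivalent to the non-existence of any rank-one $X^\star = x^\star (x^\star)^T$ optimal for \eqref{prob: semidefinite relaxation} with $I_{00}\bigcdot X^\star = 1$, since such an $X^\star$ would lift to (and be lifted from) a primal optimum of \eqref{prob: original  problem} with matching cost. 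By the complementary conditions \eqref{eq: complementary condition}, every candidate $x^\star$ must lie in $\mathcal{N}(\hat Z)$ and satisfy $M(q_i) \bigcdot x^\star (x^\star)^T = 0$ whenever $\hat y_i > 0$.

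For sufficiency (Property~I $\Rightarrow$ gap), assume (\textit{I.1})--(\textit{I.4}) for the given pair. By (\textit{I.2}) the space $\mathcal{N}(\hat Z)$ is two-dimensional, and by (\textit{I.3}) the vectors $\hat x_1,\hat x_2$ from (\textit{I.4}) form a basis of it. Any rank-one candidate $X^\star = xx^T$ then satisfies $x = \alpha \hat x_1 + \beta \hat x_2$ and, by (\textit{I.1}), $M(q_j) \bigcdot xx^T = 0$ for $j \in \{1,2\}$. Expanding the $j=1$ identity and using $M(q_1) \bigcdot \hat x_i \hat x_i^T = 0$ from (\textit{I.4}) reduces it to $2\alpha\beta \,(\hat x_1^T M(q_1) \hat x_2) = 0$. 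The hypothesis $Q_1 \succ 0$ combined with Cauchy interlacing forces $M(q_1)$ to have at most one non-positive eigenvalue, which by the standard signature formula for the dimension of a maximal totally isotropic subspace forbids a two-dimensional totally isotropic subspace. Consequently $\hat x_1^T M(q_1) \hat x_2 \neq 0$, otherwise $M(q_1)$ would vanish identically on $\mathrm{span}\{\hat x_1,\hat x_2\}$. Thus $\alpha\beta = 0$, so $x$ is a scalar multiple of either $\hat x_1$ or $\hat x_2$. But then $M(q_2) \bigcdot xx^T$ is a nonzero scalar multiple of $M(q_2) \bigcdot \hat x_i \hat x_i^T$, which is nonzero by the strict-sign inequality in (\textit{I.4}). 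This contradicts $M(q_2) \bigcdot xx^T = 0$, so no rank-one optimum exists and a gap must follow.

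For necessity (gap $\Rightarrow$ Property~I), assume no rank-one optimum of \eqref{prob: semidefinite relaxation} exists and verify (\textit{I.1})--(\textit{I.4}) for the given pair. A Pataki-type rank bound for \eqref{prob: semidefinite relaxation} (which has three affine constraints) yields an extreme optimum of rank at most $2$; ruling out rank-one forces $\rank(\hat X) = 2$, establishing (\textit{I.3}). Complementarity $\hat X \hat Z = 0$ then gives $\rank(\hat Z) \leq n-1$; equality (\textit{I.2}) is obtained by showing that a strictly smaller $\rank(\hat Z)$ would enlarge $\mathcal N(\hat Z)$ enough that the quadratic system for $x$ admits a rank-one solution with $I_{00}\bigcdot xx^T = 1$, contradicting the assumption. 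An analogous argument, using $Q_1 \succ 0$ to ensure a strict-complementarity certificate, rules out $\hat y_1 = 0$ or $\hat y_2 = 0$, giving (\textit{I.1}). Finally, (\textit{I.4}) is extracted by invoking a Sturm--Zhang-style rank-one decomposition of $\hat X$ adapted to $M(q_1)$: since $M(q_1) \bigcdot \hat X = 0$ and since $M(q_1)$ restricted to the range of $\hat X$ is necessarily indefinite (again by $Q_1 \succ 0$ and the isotropic-dimension bound above), one can write $\hat X = \hat x_1 \hat x_1^T + \hat x_2 \hat x_2^T$ with $M(q_1) \bigcdot \hat x_i \hat x_i^T = 0$ for $i \in \{1,2\}$. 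If the two values $M(q_2) \bigcdot \hat x_i \hat x_i^T$ had the same sign, a combination $\alpha \hat x_1 + \beta \hat x_2$ would produce a rank-one optimum (the two values sum to $M(q_2)\bigcdot\hat X = 0$, so neither can be zero under the no-rank-one assumption unless both vanish, which is excluded by the same argument). Hence their product is strictly negative, closing (\textit{I.4}).

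The main obstacle is the necessity direction, specifically the simultaneous control of $\rank(\hat Z)$ in (\textit{I.2}) and the existence of an $M(q_1)$-compatible rank-one decomposition in (\textit{I.4}). Both steps lean crucially on $Q_1 \succ 0$: the former through the Slater certificate for \eqref{prob: dual problem} and the resulting strict-complementarity structure, the latter through the bound on the dimension of maximal totally isotropic subspaces of $M(q_1)$, which is exactly what makes the tailored decomposition feasible. This also makes transparent why weakening the hypothesis to Slater on \eqref{prob: dual problem} (as in this paper) requires enlarging \emph{Property~I} with an additional algebraic condition to substitute for the decomposition step.
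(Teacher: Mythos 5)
First, a point of comparison: the paper does not prove this theorem itself --- it is quoted from \cite[Theorem~4.2]{ai2009strong} --- so the only in-paper yardstick is the proof of its generalization, \Cref{thm: necessary and sufficient condition for strong duality}, in \cref{sec: proof}, together with \Cref{prop: restate I+'s redundancy in Wenbao's result}. Your sufficiency direction is correct and takes a genuinely different route at the key step: you reduce the claim to the non-existence of a rank-one optimum supported on $\mathcal{N}(\hat{Z})=\mathrm{span}\{\hat{x}_1,\hat{x}_2\}$ and kill it by showing $\hat{x}_1^T M(q_1)\hat{x}_2\neq 0$ via eigenvalue interlacing and the bound on totally isotropic subspaces of $M(q_1)$. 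The paper obtains the same nonvanishing (\Cref{prop: restate I+'s redundancy in Wenbao's result}) by a congruence normalizing $M(q_1)$ to $\mathrm{diag}(-1,I_n)$ plus Cauchy--Schwarz, and its sufficiency proof for the general theorem instead establishes uniqueness of $\hat{X}$ via \Cref{lemma: SDP unique solution}. Your argument is a clean alternative for the $Q_1\succ 0$ case and makes transparent exactly where positive definiteness enters (and hence why \textit{Property~I$^+$} must add \eqref{eq:I+4Cond} in the general setting). One small omission: you need attainment of $\optval{\eqref{prob: original  problem}}$ to pass from ``no rank-one optimum of \eqref{prob: semidefinite relaxation}'' to a strict gap; here it follows from compactness of the ellipsoid $q_1\leq 0$.

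The necessity direction, however, has genuine gaps. (a) Pataki's bound yields \emph{some} extreme optimal solution of rank at most $2$; it does not show that the \emph{arbitrarily selected} $\hat{X}$ has rank $2$, which is what (\textit{I.3}) asserts. Excluding $\rank(\hat{X})\geq 3$ requires the decomposition result \cite[Theorem~3.4]{ai2009strong} to extract a rank-one optimum from a higher-rank one (Case~2 of the paper's necessity proof). (b) Your step for (\textit{I.2}) --- that a smaller $\rank(\hat{Z})$ ``enlarges $\mathcal{N}(\hat{Z})$ enough'' to admit a rank-one solution --- is precisely \cite[Lemma~3.3]{ai2009strong} (Case~4 in \cref{sec: proof}) and is the hardest construction in the whole argument; it cannot be asserted. (c) (\textit{I.1}) does not come from a ``strict-complementarity certificate'': when $\hat{y}_1\hat{y}_2=0$ the correct move is the rank-one decomposition of \cite[Lemma~2.2]{ye2003new} applied to the active constraint, followed by a pigeonhole on the signs of $M(q_2)\bigcdot \hat{x}_i\hat{x}_i^T$ (Case~1). (d) Every constructed rank-one optimum $xx^T$ must be normalized by $t^2$, and $t\neq 0$ has to be argued (easy here by compactness; in the general setting it needs \Cref{asmp: Slater for SD} and an unboundedness argument). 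The skeleton of your necessity argument matches Ai--Zhang's, but as written its load-bearing steps are the ones left out.
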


	\section{Main results}\label{chap: main results}
	We start with modifying \textit{Property~I} by adding an extra constraint and naming the resulting conditions \textit{Property~I$^+$} as follows:
	{\SecondRevisionColor
		\begin{definition}
			\label{def:PropertyI+}
			
			For $\hat{X}$ and $(\hat{Z},\hat{y}_0,\hat{y}_1,\hat{y}_2)$, a given pair of optimal solutions for \eqref{prob: semidefinite relaxation} and \eqref{prob: dual problem}, respectively, we say that this pair has \textit{Property~I$^+$} if all the conditions~(\textit{I.1})--(\textit{I.3}) and~(\textit{I$^+$.4}) specified below hold:
			\begin{itemize}
				\item[(\textit{I.1})] $\hat{y}_1\hat{y}_2 \neq 0$;
				\item[(\textit{I.2})] $\rank(\hat{Z}) = n-1$;
				\item[(\textit{I.3})] $\rank(\hat{X}) = 2$;
				\item[(\textit{I$^+$.4})] There is a rank-one decomposition of $\hat{X}$, of the form $\hat{X} = \hat{x}_1\hat{x}_1^T + \hat{x}_2\hat{x}_2^T$, for which not only~(\ref{eq: equality on the first constraint}) and~(\ref{eq: inequality on the second constraint}) are satisfied, but the following inequality also holds:
				\begin{equation} \label{eq:I+4Cond}
				M(q_1) \bigcdot \hat{x}_1\hat{x}_2^T  \neq 0.
				\end{equation}
			\end{itemize}
		\end{definition}
		
		Notice that since the conditions (\textit{I.1})--(\textit{I.3}) are the same in the definitions of \textit{Properties~I} and~\textit{I$^+$}, the fact that (\textit{I$^+$.4}) is more stringent than (\textit{I.4}) is what distinguishes the properties.}
	As we shall see in the following theorem, \textit{Property~I$^+$} is the key to the necessary and sufficient condition for the optimality gap (or, equivalently, for the duality gap between \eqref{prob: original  problem} and \eqref{prob: dual problem}), when the positive definiteness of $Q_1$ is not assumed.

	\begin{theorem} \label{thm: necessary and sufficient condition for strong duality}
		Consider \eqref{prob: original  problem} and let \Cref{asmp: Slater for SP} and \Cref{asmp: Slater for SD} hold. {\ThirdRevisionColor Denote with $\mathfrak{P}$ the set of all pairs $\hat{X}'$ and $(\hat{Z}',\hat{y}_0',\hat{y}_1',\hat{y}_2')$ of optimal solutions for the semidefinite relaxation \eqref{prob: semidefinite relaxation} and the dual problem \eqref{prob: dual problem}, respectively. For any given pair $\hat{X}$ and $(\hat{Z},\hat{y}_0,\hat{y}_1,\hat{y}_2)$ arbitrarily selected from $\mathfrak{P}$, } ${\optval{\eqref{prob: semidefinite relaxation}}< \optval{\eqref{prob: original  problem}}}$ holds if and only if the pair satisfies \textit{Property~I$^+$}.
	\end{theorem}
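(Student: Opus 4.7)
My plan is to reduce the question via \Cref{remark: tight relaxation and strong duality} to whether some optimal $\hat{X}$ admits a rank-one component that, after rescaling to $t^{2}=1$, is feasible for \eqref{prob: original  problem} with cost equal to $\optval{\eqref{prob: semidefinite relaxation}}$. From the dual constraint $M(q_{0}) = \hat{y}_{0} I_{00} - \hat{y}_{1} M(q_{1}) - \hat{y}_{2} M(q_{2}) + \hat{Z}$ together with $\hat{x}_{k}\in\mathcal{N}(\hat{Z})$ (a consequence of \eqref{eq: complementary condition-XZ}), such a rescaled rank-one atom attains cost $\optval{\eqref{prob: semidefinite relaxation}}$ if and only if both $M(q_{1})\bigcdot\hat{x}_{k}\hat{x}_{k}^{T} = 0$ and $M(q_{2})\bigcdot\hat{x}_{k}\hat{x}_{k}^{T} = 0$ hold (under~(\textit{I.1}), so $\hat{y}_{1},\hat{y}_{2}>0$). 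The optimality gap question therefore reduces to an algebraic feasibility question about rank-one decompositions of $\hat{X}$.

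I would first treat the cases in which one of~(\textit{I.1})--(\textit{I.3}) fails; these are handled by single-constraint trust-region-type arguments or by a null-space-dimension flexibility argument, much as in~\cite{ai2009strong}, but with \Cref{asmp: Slater for SD} taking over the role played there by $Q_{1}\succ 0$. In the core case, $\rank(\hat{X})=2$ and $\dim\mathcal{N}(\hat{Z})=2$, so the family of rank-one decompositions of $\hat{X}$ forms an $SO(2)$-orbit inside $\mathcal{N}(\hat{Z})$, which I parametrize as $\hat{x}_{k}(\theta)$. Setting $\alpha_{k}(\theta)\defeq M(q_{1})\bigcdot\hat{x}_{k}(\theta)\hat{x}_{k}(\theta)^{T}$ and $\gamma_{k}(\theta)\defeq M(q_{2})\bigcdot\hat{x}_{k}(\theta)\hat{x}_{k}(\theta)^{T}$, complementary slackness \eqref{eq: complementary condition-q1}--\eqref{eq: complementary condition-q2} combined with~(\textit{I.1}) gives $\alpha_{1}+\alpha_{2}=\gamma_{1}+\gamma_{2}=0$, and the matrix rank-one decomposition technique used in~\cite{ai2009strong} (which does not require $Q_{1}\succ 0$) furnishes a base rotation $\theta=0$ at which \eqref{eq: equality on the first constraint} holds.

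A short trigonometric computation then shows $\alpha_{k}(\theta)$ is a pure double-angle sinusoid whose amplitude equals $|M(q_{1})\bigcdot\hat{x}_{1}\hat{x}_{2}^{T}|$: so (\textit{I$^{+}$.4}) holds iff $\alpha_{k}\not\equiv 0$, iff \eqref{eq: equality on the first constraint} is satisfied only at isolated rotations. If (\textit{I$^{+}$.4}) fails, \eqref{eq: equality on the first constraint} persists along the entire one-parameter family, and $\gamma_{k}(\theta)$ is itself a nonzero sinusoid (its amplitude forced to be nonzero by \eqref{eq: inequality on the second constraint}) which admits a common zero $\theta^{*}$ with $\gamma_{1}(\theta^{*})=\gamma_{2}(\theta^{*})=0$; the trace relation $I_{00}\bigcdot\hat{X}=1$ forces at least one of $\hat{x}_{k}(\theta^{*})$ to have a nonzero first coordinate, producing a rank-one primal optimum and closing the gap. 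If (\textit{I$^{+}$.4}) holds, the rotations preserving \eqref{eq: equality on the first constraint} reduce to signed copies of the base decomposition, and at these \eqref{eq: inequality on the second constraint} prevents any $\gamma_{k}$ from vanishing, so no rank-one component of $\hat{X}$ admits the required cost match and the gap is forced. The principal obstacle will be the intermediate-value step together with the nonzero-first-coordinate check---which is what Ai--Zhang's positive definiteness of $Q_{1}$ yields automatically---and ensuring the characterization is invariant under the choice of optimal pair from $\mathfrak{P}$, as is needed to justify the ``any pair'' phrasing of the statement.
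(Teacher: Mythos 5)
Your reduction via the dual certificate and your treatment of the core case are sound, and in the sufficiency direction your route is genuinely different from ours: you rule out rank-one optimal solutions directly, by observing that (\textit{I.2}) confines the range of every optimal solution of \eqref{prob: semidefinite relaxation} to the two-dimensional space $\mathcal{N}(\hat{Z})=\mathrm{range}(\hat{X})$, parametrizing the candidate atoms as an $SO(2)$-orbit, and noting that \eqref{eq:I+4Cond} forces any atom with $M(q_1)\bigcdot xx^T=0$ to be proportional to $\hat{x}_1$ or $\hat{x}_2$, where \eqref{eq: inequality on the second constraint} then blocks $M(q_2)\bigcdot xx^T=0$. We instead prove that $\hat{X}$ is the \emph{unique} solution of \eqref{prob: semidefinite relaxation} via a low-rank-SDP uniqueness result (\Cref{lemma: SDP unique solution}, after \cite{lemon2016low}), checking $\mathcal{N}(\mathcal{A}_V)=\{0_{2\times 2}\}$ through the determinant of a $3\times 3$ matrix; your span argument is more elementary and reaches the same conclusion. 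Your double-angle computation in the necessity direction is the explicit-trigonometry version of our Cases~3 and~5, where the rotation is carried out with the procedure of \cite[Lemma~2.2]{ye2003new}.

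The genuine gap is precisely the obstacle you flag and then leave unresolved: the nonzero-first-coordinate check in the cases where (\textit{I.1})--(\textit{I.3}) fail. This is where \Cref{asmp: Slater for SD} must actually be deployed, and it is not a routine adaptation of \cite{ai2009strong}. When (\textit{I.1}) fails, the selected atom $\hat{x}_1$ satisfies only inequalities, and if $\hat{t}_1=0$ then $\{\hat{X}+\alpha\hat{x}_1\hat{x}_1^T \mid \alpha\ge 0\}$ is an unbounded set of optimal solutions of \eqref{prob: semidefinite relaxation}, contradicting dual strict feasibility by \cite[Theorem~2.5]{sturm1997primal}. When $\rank(\hat{X})\ge 3$ or $\rank(\hat{Z})<n-1$, the construction requires \cite[Theorem~3.4]{ai2009strong} and \cite[Lemma~3.3]{ai2009strong} respectively (the latter applied to $\hat{X}+yy^T$ with $y\in\mathcal{N}(\hat{X})\cap\mathcal{N}(\hat{Z})$), and there $\hat{t}_1=0$ is excluded by combining the dual feasibility identity with the active constraints to obtain $\hat{z}_1^T(Q_0+\bar{y}_1 Q_1+\bar{y}_2 Q_2)\hat{z}_1=0$ for all $\bar{y}_1,\bar{y}_2>0$, which contradicts \eqref{eq: Slater's condition for SD 1} unless $\hat{x}_1=0$. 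Without such arguments your construction can produce an atom with vanishing $t$-component that cannot be rescaled into a solution of \eqref{prob: original  problem}, so the proof is incomplete as stated. (A smaller point: your claim that the amplitude of $\gamma_k(\theta)$ is ``forced to be nonzero by \eqref{eq: inequality on the second constraint}'' only applies when (\textit{I.4}) holds; when (\textit{I.4}) fails, $\gamma_1=-\gamma_2$ already vanishes at the base decomposition and no rotation is needed, which is our Case~3.)
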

	
	\begin{proof}
		See \cref{sec: proof}.
	\end{proof}
	
	{\ThirdRevisionColor
		\begin{remark} Notice that Theorem~\ref{thm: necessary and sufficient condition for strong duality}, and its proof given in~\cref{sec: proof}, hold irrespective of the choice of the pair $\hat{X}$ and $(\hat{Z},\hat{y}_0,\hat{y}_1,\hat{y}_2)$ so long as it is in $\mathfrak{P}$. Hence, one immediate conclusion from Theorem~\ref{thm: necessary and sufficient condition for strong duality} is that either all pairs in $\mathfrak{P}$ satisfy \textit{Property~I$^+$} or all pairs in $\mathfrak{P}$ violate \textit{Property~I$^+$}. This is particularly relevant for applications because in order to use the theorem to determine whether there is an optimality gap, it suffices to check if one (arbitrarily chosen) pair in $\mathfrak{P}$ satisfies \textit{Property~I$^+$}. Moreover, it will follow from our discussion in section~\ref{chap: test}, where we describe an algorithm (\Cref{algo: strong duality}) to test \textit{Property~I$^+$}, that if (\textit{I$^+$.1})--(\textit{I$^+$.4}) are checked in the order and manner specified in the algorithm then the validity of (\textit{I$^+$.4}) can be determined by checking only one rank-one decomposition of $\hat{X}$ arbitrarily chosen among those satisfying~(\ref{eq: equality on the first constraint}). Remark~\ref{rem:AboutAlgo4.1} gives a justification of this fact by leveraging our proof of Theorem~\ref{thm: necessary and sufficient condition for strong duality} in~\cref{sec: proof}.
		\end{remark} 
		
		Notice that the test in \Cref{thm: necessary and sufficient condition for strong duality} is tractable because it involves the solution of an SDP, and its dual, followed by a rank-one decomposition, both of which run in polynomial time~\cite{ye2003new}.} {\firstRevisionColor Furthermore,
		the necessity part of the proof (\cref{sec: proof}) of \Cref{thm: necessary and sufficient condition for strong duality} is constructive and useful in its own right because it embeds the description of a method to \underline{construct a solution for~\eqref{prob: original  problem}} from that of~\eqref{prob: semidefinite relaxation} when \textit{Property~I$^+$} is violated. }
	
	Besides the numerical results that are going to be presented in the next section, \Cref{thm: necessary and sufficient condition for strong duality} was used in~\cite[Theorem~2]{chengmartins2018reaching} to propose conditions under which there is no optimality gap for a control-oriented QC2QP problem.
	
	
	
	{\firstRevisionColor
		
		The following remark is relevant due to the intriguing fact that, although $M(q_1)$ and $M(q_2)$ play identical ``roles" in  \eqref{prob: original  problem} and \eqref{prob: semidefinite relaxation}, they enter Definitions~\ref{def: Wenbao's Property I} and~\ref{def:PropertyI+} in different ways. 
		\begin{remark} \label{rem:symmetry}
			One can readily verify by inspection that interchanging $M(q_1)$ and $M(q_2)$ in \eqref{prob: original  problem} and~\eqref{prob: semidefinite relaxation} would not modify the underlying problem and, in particular, would not alter whether $\optval{\eqref{prob: semidefinite relaxation}}< \optval{\eqref{prob: original  problem}}$ holds. Similarly, \Cref{thm: necessary and sufficient condition for strong duality} would remain valid had we chosen to  interchange $M(q_1)$ and $M(q_2)$ in Definitions~\ref{def: Wenbao's Property I} and~\ref{def:PropertyI+}. 
			
		\end{remark}
	}
	\interfootnotelinepenalty=10000 
	{\firstRevisionColor 
		We now proceed to remark on the differences between the proofs given in~\cref{sec: proof} and \cite{ai2009strong} for Theorems~\ref{thm: necessary and sufficient condition for strong duality} and~\ref{thm: Wenbao's theorem}, respectively.
		
		\begin{remark}
			We start by observing that, in~\cref{sec: proof}, we prove the sufficient condition of~Theorem~\ref{thm: necessary and sufficient condition for strong duality} by exploring the fact that $\hat{X}$ is the unique solution of \eqref{prob: semidefinite relaxation} when \textit{Property~I${}^+$} holds, whereas the proof of the sufficient condition of~\Cref{thm: Wenbao's theorem} is ascertained in~\cite{ai2009strong} using contradiction.
			The proofs of the necessary condition in~\cite{ai2009strong} and~\cref{sec: proof} construct a rank-one solution to~\eqref{prob: semidefinite relaxation} for the possible cases in which~\textit{Property~I} and \textit{Property~I${}^+$} fail, respectively. Although the four cases enumerated in the former are also present in the latter, there are differences in the analysis. Specifically, in~\cref{sec: proof} we need to invoke \Cref{asmp: Slater for SD} in cases 1, 2, and 4 to show the existence of a nonzero solution, while the argument in~\cite{ai2009strong} uses the positive definiteness\footnote{\firstRevisionColor Recall that in our article $Q_0$, $Q_1$, and $Q_2$ are only required to satisfy \Cref{asmp: Slater for SP} and \Cref{asmp: Slater for SD}. It may be the case that none is positive definite.} of $Q_1$. In addition, we need to consider the fifth case in~\cref{sec: proof} to account for when \textit{Property~I${}^+$} fails and \textit{Property~I} holds. 
		\end{remark}
	}

	\section{Testing \textit{Property~I$^+$} numerically}\label{chap: test}
	
	\Cref{thm: necessary and sufficient condition for strong duality} enables a simply verifiable optimality gap test for a semidefinite relaxation of a QC2QP. This test only requires to solve one SDP with its dual and conducting a rank-one decomposition, both of which run in polynomial time. However, in general, SDP solvers (for example, SDPT3~\cite{toh1999sdpt3} and SeDuMi~\cite{sturm1999using}) give approximate, rather than exact, solutions within certain tolerance. Hence, it is useful to establish an optimality gap test utilizing {\textit{Property~I$^{+}$}} in an approximation sense. The following procedures refer to the purified $(\epsilon_1,\epsilon_2)$-approximation in \cite{ai2009strong}:
	
	Let $\hat{X}$ and $(\hat{Z},\hat{y}_0,\hat{y}_1,\hat{y}_2)$ denote a pair of numerical solutions of \eqref{prob: semidefinite relaxation} and its dual \eqref{prob: dual problem}, respectively, solved by an SDP solver whose tolerance is $\epsilon_1 > 0$. Let $\epsilon_2>0$ be the tolerance for purification. First, conduct an eigendecomposition of $\hat{X}$ and $\hat{Z}$, that is,
	\begin{gather}
	\hat{X} = P_1^T \Lambda_1 P_1,\\
	\hat{Z} = P_2^T \Lambda_2 P_2,
	\end{gather}
	where $P_1$ and $P_2$ are $(n+1)\times (n+1)$-dimensional orthogonal matrices, and $\Lambda_1$ and $\Lambda_2$ are $(n+1) \times (n+1)$-dimensional diagonal matrices. Let $\Lambda_i \defeq \text{diag}(\lambda_{i1},\dots,\lambda_{i(n+1)}),$ and let 
	\begin{equation}
	\lambda_{ij}^* \defeq \left\{ 
	\begin{aligned}
	\lambda_{ij},\quad & \lambda_{ij} \geq \epsilon_2\\
	0,\quad & \lambda_{ij} < \epsilon_2
	\end{aligned}
	\right., \quad i \in \{1,2 \}, \quad j \in \{1,2,\dots,n+1\}.
	\end{equation}
	Define the purified solutions by
	\begin{align}
	X^* & \defeq P_1^T \text{diag}(\lambda_{11}^*,\dots,\lambda_{1(n+1)}^*)P_1,\\
	Z^* & \defeq P_2^T \text{diag}(\lambda_{21}^*,\dots,\lambda_{2(n+1)}^*)P_2,\\
	y_i^* &\defeq \hat{y}_i, \qquad i \in \{0,1,2\}.
	\end{align}
	The above step essentially purifies the rank of $\hat{X}$ and $\hat{Z}$ so that
	\begin{align}
	\rank(\hat{X},\epsilon_2) & = \rank(X^*,\epsilon_2),\\
	\rank(\hat{Z},\epsilon_2) & = \rank(Z^*,\epsilon_2).
	\end{align}
	
	\begin{remark}
		We use \textit{numerical $\epsilon$-rank}, as defined in \cite{hansen2005rank}, to determine the numerical rank of a matrix with tolerance $\epsilon$. Namely, the numerical rank of a matrix $A$ in $\mathbb{R}^{m \times n}$ with tolerance $\epsilon$, denoted by $\rank(A,\epsilon)$, is $r$, if the singular values, $\sigma_1 \geq \sigma_2 \geq \dots \geq \sigma_{\text{min}(m,n)} \geq 0$, of $A$ satisfy $\sigma_1 \geq \sigma_2 \geq \dots \geq \sigma_r > \epsilon > \sigma_{r+1} \geq \dots \geq \sigma_{\text{min}(m,n)}$. For more details, see \cite{golub2012matrix}~and~\cite{hansen2005rank}.
	\end{remark}
	
	Henceforth, we use $X^*$ and $(Z^*,y_0^*,y_1^*,y_2^*)$ to denote a pair of purified $(\epsilon_1,\epsilon_2)$-approximate optimal solutions of \eqref{prob: semidefinite relaxation} and \eqref{prob: dual problem}, respectively.
	
	\begin{definition}\label{def: property I in approximation}
		For $X^*$ and $(Z^*,y_0^*,y_1^*,y_2^*)$, a given pair of purified $(\epsilon_1,\epsilon_2)$-approximate optimal solutions of \eqref{prob: semidefinite relaxation} and \eqref{prob: dual problem}, respectively, we say that this pair has \textit{Property~I$^+(\epsilon_2)$} if:
		\begin{enumerate}[leftmargin=2cm]
			\item[(\textit{I$^+(\epsilon_2)$.1})] $y_1^* > \epsilon_2$ and $y_2^* > \epsilon_2$;
			\item[(\textit{I$^+(\epsilon_2)$.2})] $\rank(Z^*,\epsilon_2) = n-1$;
			\item[(\textit{I$^+(\epsilon_2)$.3})] $\rank(X^*,\epsilon_2) = 2$;
			\item[(\textit{I$^+(\epsilon_2)$.4})] There is a rank-one decomposition of $X^*$, of the form $X^*= x_1^*(x_1^*)^T + x_2^*(x_2^*)^T$, for which the following hold:
			\begin{align}
			| M(q_1) \bigcdot x_i^*(x_i^*)^T | & < \epsilon_2, \qquad i \in \{1,2\},\label{eq: epsilon inequality 0}\\
			\revisionColor \left ( M(q_2) \bigcdot x_1^*(x_1^*)^T \right) \left( M(q_2) \bigcdot x_2^*(x_2^*)^T \right) & \revisionColor <- \epsilon_2^2 ,\label{eq: epsilon inequality 1}\\
			|M(q_1) \bigcdot x_1^*(x_2^*)^T| &> \epsilon_2. \label{eq: epsilon inequality 3}
			\end{align}
		\end{enumerate}
	\end{definition}

	Now, we introduce a polynomial-time algorithm to test the optimality gap for the relaxation \eqref{prob: semidefinite relaxation}, with $\epsilon_1$-precision SDP solutions $\hat{X}$ and $(\hat{Z},\hat{y}_0,\hat{y}_1,\hat{y}_2)$ as well as \textit{Property~I$^+(\epsilon_2)$}.
	
	\begin{algorithm}[H]
		\caption{Optimality gap test}
		\label{algo: strong duality}
		\begin{algorithmic}[1]
			\REQUIRE $\epsilon_1,\epsilon_2,Q_0,Q_1,Q_2,q_0,q_1,q_2,c_1,c_2$
			\STATE{Solve \eqref{prob: semidefinite relaxation} and $\eqref{prob: dual problem}$ for $\hat{X}$ and $(\hat{Z},\hat{y}_0,\hat{y}_1,\hat{y}_2)$, respectively, using an SDP solver with $\epsilon_1$-precision.}
			\STATE{Compute the purified $(\epsilon_1,\epsilon_2)$-approximate optimal pair of solutions $X^*$ and $(Z^*,y^*_0,y^*_1,y^*_2)$.}
			{\SecondRevisionColor
				\STATE{Check the conditions of \textit{Property I$^+(\epsilon_2)$} in the following order:}
				\IF{(\textit{I$^+(\epsilon_2)$.1}), (\textit{I$^+(\epsilon_2)$.2}), or (\textit{I$^+(\epsilon_2)$.3}) is violated}
				\STATE{There is no optimality gap.}
				\ELSE
				\STATE{Using \cite[Lemma~2.2]{ye2003new}, conduct a rank-one decomposition of $X^*$ of the form $X^*= x_1^*(x_1^*)^T + x_2^*(x_2^*)^T$ satisfying~\eqref{eq: epsilon inequality 0}.}
				\IF{\revisionColor $x_1^*$ and $x_2^*$ violate \eqref{eq: epsilon inequality 1} 
				}
				\STATE{There is no optimality gap.}
				\ELSIF{$x_1^*$ and $x_2^*$ violate \eqref{eq: epsilon inequality 3}}
				\STATE{There is no optimality gap.}
				\ELSE
				\STATE{An optimality gap exists.}
				\ENDIF
				\ENDIF
			}
		\end{algorithmic}
	\end{algorithm}
	

	{\SecondRevisionColor
		\begin{remark}
			\Cref{algo: strong duality} can always find the gap when $\epsilon_2$ is chosen properly. The algorithm relies on \textit{Property~I$^+(\epsilon_2)$} instead of \textit{Property~I$^+$} because the latter is exact, which cannot handle the round-off errors in numerical computations. Hence, $\epsilon_2$ must be carefully chosen so that
			\begin{enumerate}
				\item The terms that are zero-valued in \textit{Property~I$^+$} will be in the $\epsilon_2$-neighborhood of zero subject to round-off errors. Likewise, the terms that are nonzero-valued will be out of the $\epsilon_2$-neighborhood of zero.
				\item Only the significant eigenvalues (those greater or equal to $\epsilon_2$) of $\hat{X}$ and $\hat{Z}$ are counted towards the numerical rank of each matrix.
			\end{enumerate}
		\end{remark}
	}
	
	By \Cref{remark: tight relaxation and strong duality}, the test described by \Cref{algo: strong duality} can also be applied to check whether a duality gap exists for \eqref{prob: original  problem}. Note that it could be difficult to check the existence of the duality gap using other methods, for example, solving for the global minimum of the primal problem. The primal problem is nonconvex, though smooth. Hence, to find the global minimum, a good initial point is necessary for the convergence of a gradient-based algorithm. However, to the best of the authors' knowledge, there is no efficient method to pick an initial point for convergence to the global minimum.
	
	{\ThirdRevisionColor
		\begin{remark}
			\label{rem:AboutAlgo4.1}
			We proceed to explain why in Steps~8~and~10 of \Cref{algo: strong duality} we test whether only one rank-one decomposition $(x_1^*,x_2^*)$ (arbitrarily selected among those complying with \eqref{eq: epsilon inequality 0}) violates \eqref{eq: epsilon inequality 1} 
			or \eqref{eq: epsilon inequality 3}. A clarification is warranted because, even when it holds, the test {\revisionColor in and of itself} does not preclude the existence of another rank-one decomposition satisfying \eqref{eq: epsilon inequality 0}--\eqref{eq: epsilon inequality 3}. In our justification, we will repurpose portions of the proof of \Cref{thm: necessary and sufficient condition for strong duality} in section~\ref{sec: proof}. Namely, {\revisionColor if $(x_1^*,x_2^*)$ violates \eqref{eq: epsilon inequality 1}
			} in Step~8 then we can use $(x_1^*,x_2^*)$ as the rank-one pair selected in Case~3 of the necessity proof in section~\ref{sec: proof} and follow the procedure therein to establish constructively the absence of an optimality gap. 
			{\revisionColor Conversely, if $(x_1^*,x_2^*)$ satisfies \eqref{eq: epsilon inequality 1}
				, which means that $(x_1^*,x_2^*)$ ``approximately'' belongs to the set $\mathbb{O}^+(\hat{X})$ specified in Remark~\ref{rem:setO} of section~\ref{sec: proof}, and if $(x_1^*,x_2^*)$ violates \eqref{eq: epsilon inequality 3} in Step~10 then we can use $(x_1^*,x_2^*)$ as the rank-one pair selected in Case~5 of the proof and establish the absence of the optimality gap.}
			Otherwise, if $(x_1^*,x_2^*)$ satisfies \eqref{eq: epsilon inequality 1} 
			and \eqref{eq: epsilon inequality 3} in Step~12 then the pair $X^*$ and $(Z^*,y^*_0,y^*_1,y^*_2)$ satisfies \textit{Property I$^+(\epsilon_2)$}, in which case we can follow the sufficiency proof in section~\ref{sec: proof} to show that there is an optimality gap.
		\end{remark}
	}

	
	We have implemented \Cref{algo: strong duality} in a MATLAB script which is available online~\cite{cheng2018GitHubCode}\footnote{ \firstRevisionColor The description and data of a numerical experiment that tests the proportion of the randomly generated feasible nonconvex QC2QP instances of which there is no optimality gap are also available online~\cite{cheng2018GitHubCode}.}. We invite the interested readers to use this script with their own QC2QP problem data.
	
	We provide numerical examples which contain two nonconvex QC2QPs, in order to illustrate the test described in \Cref{algo: strong duality}. The optimality gap does not exist in the first example but does in the second one. Note that \Cref{thm: Wenbao's theorem} cannot be applied to the optimality gap test because the constraints in both examples are nonconvex, which violates the assumption in~\cite{ai2009strong} that requires at least one of the constraints to be strictly convex. 
	
	To solve SDPs, we use CVX \cite{gb08}\cite{cvx} with solver SDPT3 \cite{toh1999sdpt3} and the default tolerance $\epsilon_1 = 1.49\times 10^{-8}$. We set the purification tolerance $\epsilon_2$ to $1 \times 10^{-5}$. 
	
	\subsection{Example: (there is no optimality gap)}
	\label{example:1}
	Consider the following data in~\eqref{prob: very original problem}:
	\begin{gather}
	Q_0 = \begin{bmatrix}
	2 & -4 \\
	-4 & -2
	\end{bmatrix},\quad Q_1 = \begin{bmatrix}
	4 & -5 \\
	-5 & 2
	\end{bmatrix},\quad Q_2 = \begin{bmatrix}
	0 & 2 \\
	2 & 2
	\end{bmatrix}, \\
	b_0 = \begin{bmatrix}
	0 \\ 0
	\end{bmatrix},\quad b_1 = \begin{bmatrix}
	2 \\ 0
	\end{bmatrix},\quad b_2 = \begin{bmatrix}
	0 \\ 5
	\end{bmatrix},\quad c_1 = -1,\quad c_2 = -4. 
	\end{gather}
	Both the objective function and constraints are hyperbolic and nonconvex. \Cref{asmp: Slater for SP} and \Cref{asmp: Slater for SD} are verified to hold in this example. Solving \eqref{prob: semidefinite relaxation} and \eqref{prob: dual problem}, we obtain the purified $(\epsilon_1,\epsilon_2)$-approximate optimal solutions: 
	\begin{align}
	X^* & \approx \begin{bmatrix}
	1.0000000 & -0.7547192 & -3.9916123 \\
	-0.7547192 &  0.5696011 & 3.0125464 \\
	-3.9916123  & 3.0125464 & 15.9329684
	\end{bmatrix},\\
	Z^* & \approx \begin{bmatrix}
	45.5612496 &  0.3855596 & 11.3413460 \\
	0.3855596 &  2.7711193 & -0.4273607 \\
	11.3413460 & -0.4273607 &  2.9220980
	\end{bmatrix},\\
	y_0^* & \approx -54.8271062, \quad  y_1^* \approx 0.1927798, \quad  y_2^* \approx 2.2682692,
	\end{align}
	where $\rank(X^*,\epsilon_2) = 1$, $\rank(Z^*,\epsilon_2) = 2$, and a rank-one decomposition yields ${X^* = x^*(x^*)^T}$ such that 
	\begin{gather}
	x^* \approx \begin{bmatrix}
	-1.0000000 &
	0.7547192 &
	3.9916123
	\end{bmatrix}^T.
	\end{gather}
	
	It can be verified that 
	\begin{gather}
	|M(q_1) \bigcdot x^* (x^*)^T| < \epsilon_2, \\
	|M(q_2) \bigcdot x^* (x^*)^T| < \epsilon_2.
	\end{gather}
	
	Denote the normalized solutions by $\hat{z}$, where $\hat{z} = z^*/t^*$ follows the partition of $x^*$ in the form of \eqref{eq: partition of x}. Thus, 
	\begin{equation}
	\hat{z} \approx \begin{bmatrix}
	-0.7547192 &
	-3.9916123
	\end{bmatrix}^T
	\end{equation}
	is marked in \Cref{fig: strong duality}.
	
	Since \textit{Property~I$^+(\epsilon_2)$} is violated, we claim that there is no optimality gap. This can be verified since the primal optimal solution
	\begin{equation}
	z^+ \approx \begin{bmatrix}
	-0.7547192 &
	-3.9916123
	\end{bmatrix}^T
	\end{equation}
	coincides with $\hat{z}$. The globally optimal value is $-54.8271061$, which is identical to that of \eqref{prob: semidefinite relaxation}.
	
	\begin{figure}[tbhp]
		\centering
		\includegraphics[width = \textwidth]{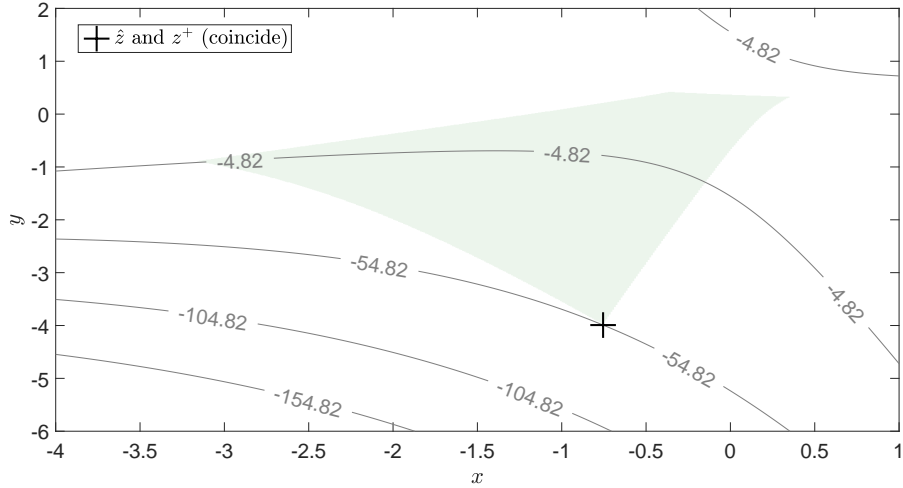}	
		\caption{Illustration of the numerical example in which there is no optimality gap. The contour plot represents the objective function, whereas the filled region represents the feasible set. The primal optimum $z^+$ coincides with the normalized solution $\hat{z}$ from \eqref{prob: semidefinite relaxation}.}
		\label{fig: strong duality}
	\end{figure}
	
	\subsection{Example: (there is an optimality gap)}
	\label{example:2}
	Consider the following data in \eqref{prob: very original problem}:
	\begin{gather}
	Q_0 = \begin{bmatrix}
	-1 & -2 \\
	-2 & 1
	\end{bmatrix},\quad Q_1 = \begin{bmatrix}
	3 & 1 \\
	1 & -2
	\end{bmatrix},\quad Q_2 = \begin{bmatrix}
	4 & 5 \\
	5 & 1
	\end{bmatrix}, \\
	b_0 = \begin{bmatrix}
	-2 \\ 0
	\end{bmatrix},\quad b_1 = \begin{bmatrix}
	3 \\ 2
	\end{bmatrix},\quad b_2 = \begin{bmatrix}
	-1 \\ 5
	\end{bmatrix},\quad c_1 = -2,\quad c_2 = 4. 
	\end{gather}
	Both the objective function and constraints are hyperbolic and nonconvex. \Cref{asmp: Slater for SP} and \Cref{asmp: Slater for SD} are verified to hold in this example. Solving \eqref{prob: semidefinite relaxation} and \eqref{prob: dual problem}, we obtain the purified $(\epsilon_1,\epsilon_2)$-approximate optimal solutions:
	\begin{align}
	X^* & \approx \begin{bmatrix}
	1.0000000 &  0.9982700 & -1.2814553 \\
	0.9982700 & 2.2688396 &  -0.0999477 \\
	-1.2814553 & -0.0999477 &  2.7352111
	\end{bmatrix},\\
	Z^* & \approx \begin{bmatrix}
	3.4958344 &  -1.4683240 &  1.5841752\\
	-1.4683240 &  0.6167270 & -0.6653869\\
	1.5841752 & -0.6653869 &  0.7178861
	\end{bmatrix},\\
	y_0^* & \approx -3.1269177, \quad  y_1^* \approx 0.2495621, \quad  y_2^* \approx 0.2170102,
	\end{align}
	where $\rank(X^*,\epsilon_2) = 2$, $\rank(Z^*,\epsilon_2) = 1$, and a rank-one decomposition yields $X^* = x_1^*(x_1^*)^T + x_2^*(x_2^*)^T$ such that 
	\begin{align}
	x_1^* & \approx \begin{bmatrix}
	0.1712233 & -0.9403767 & -1.2494816
	\end{bmatrix}^T,\\
	x_2^* & \approx \begin{bmatrix}
	0.9852322 & 1.1766611 & -1.0835160
	\end{bmatrix}^T.
	\end{align}
	
	It can be verified that 
	\begin{align}
	| M(q_1) \bigcdot x_i^* (x_i^*)^T| &< \epsilon_2, \qquad i \in \{1,2\},\\
	M(q_2) \bigcdot x_1^* (x_1^*)^T &> \epsilon_2, \\
	\ M(q_2) \bigcdot x_2^* (x_2^*)^T &< -\epsilon_2,\\
	|M(q_1) \bigcdot x_1^* (x_2^*)^T| &> \epsilon_2.
	\end{align}
	
	Denote the normalized solutions by $\hat{z}_1$ and $\hat{z}_2$, where $\hat{z}_1 = z_1^*/t_1^*$ and $\hat{z}_2 = z_2^*/t_2^*$ follow the partition of $x_1^*$ and $x_2^*$, respectively, in the form of \eqref{eq: partition of x}. Thus, 
	\begin{align}
	\hat{z}_1 & \approx \begin{bmatrix}
	-5.4921056 & -7.2973787
	\end{bmatrix}^T, \\
	\hat{z}_2 & \approx \begin{bmatrix}
	1.1942982 &	-1.0997569
	\end{bmatrix}^T,
	\end{align}
	are marked in \Cref{fig: nostrongduality}.
	
	Since \textit{Property~I$^+(\epsilon_2)$} is met, we claim that there is an optimality gap. This can be verified since the primal optimal solution 
	\begin{equation}
	z^+ \approx \begin{bmatrix}
	0.5251114 &	-0.3446140
	\end{bmatrix}^T,
	\end{equation}
	which is marked in \Cref{fig: nostrongduality}, yields the globally optimal value $-1.5335857$, whereas the optimal value of \eqref{prob: semidefinite relaxation} is $-3.1269177$.

	\begin{figure}[tbhp]
		\centering
		\includegraphics[width =  \textwidth]{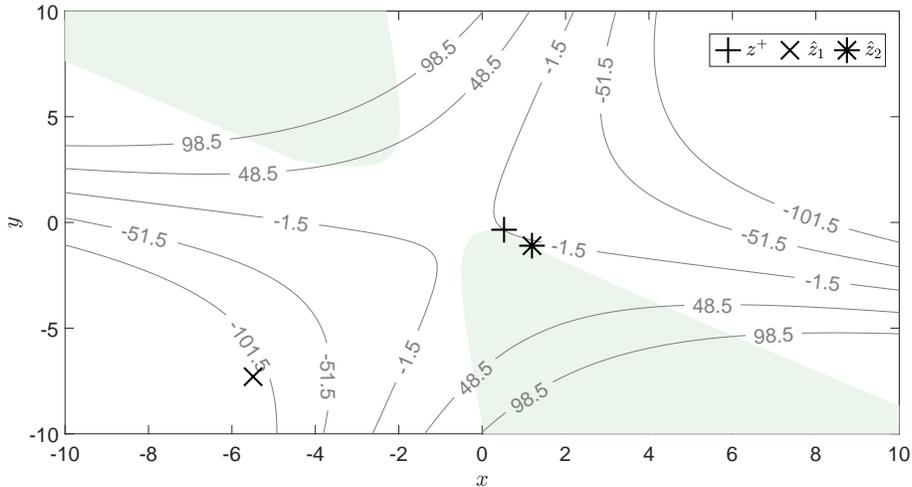}	
		\caption{Illustration of the numerical example in which there is an optimality gap. The contour plot represents the objective function, whereas the filled region represents the feasible set.}
		\label{fig: nostrongduality}
	\end{figure}

	\section{On why \Cref{thm: necessary and sufficient condition for strong duality} is an extension of \Cref{thm: Wenbao's theorem}} \label{chap: comparison}
	
	The following facts support the claim that~\Cref{thm: necessary and sufficient condition for strong duality} is an extension of \Cref{thm: Wenbao's theorem}: 
	\begin{itemize}
		\item The assumptions required for the validity of~\Cref{thm: necessary and sufficient condition for strong duality} are weaker than those needed in~\Cref{thm: Wenbao's theorem} (see~\Cref{remark: Ai and Zhang's special case of assumption} for more details). 
		
		\item \textit{Property~I$^+$}, which determines the test in the necessary and sufficient condition of~\Cref{thm: necessary and sufficient condition for strong duality}, involves an extra condition when compared with \textit{Property~I} used in~\Cref{thm: Wenbao's theorem}. Furthermore, as the following proposition establishes, \textit{Property~I$^+$} and~\textit{Property~I} are equivalent when $Q_1 \succ 0$, as assumed in~\Cref{thm: Wenbao's theorem}.
	\end{itemize}
	
	In short, if the assumptions of~\Cref{thm: Wenbao's theorem} are satisfied then those of~\Cref{thm: necessary and sufficient condition for strong duality} are also valid and the necessary and sufficient test is equivalent. However, the opposite does not hold, as the parameters defining the examples in~\cref{example:1} and~\cref{example:2} illustrate.
	
	
	\begin{proposition}\label{prop: restate I+'s redundancy in Wenbao's result}
		Consider \eqref{prob: original  problem} where Slater's condition holds and $Q_1 \succ 0$. Let $\hat{X}$ and $(\hat{Z},\hat{y}_0,\hat{y}_1,\hat{y}_2)$ denote a pair of optimal solutions for \eqref{prob: semidefinite relaxation} and \eqref{prob: dual problem}, respectively. If $\rank(\hat{X}) = 2$ and $\hat{y}_1 \neq 0$, then there exists a rank-one decomposition of $\hat{X}$, of the form $\hat{X} = \hat{x}_1\hat{x}_1^T + \hat{x}_2 \hat{x}_2^T$, which satisfies $M(q_1) \bigcdot \hat{x}_1\hat{x}_1^T=M(q_1) \bigcdot \hat{x}_2\hat{x}_2^T=0$ and $M(q_1) \bigcdot \hat{x}_1\hat{x}_2^T \neq 0$.
	\end{proposition}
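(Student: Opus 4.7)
The plan is to combine complementary slackness, a standard rank-one decomposition lemma, and a dimension-counting argument that exploits the strict positivity of $Q_1$. First, since $\hat{y}_1 \neq 0$ and the pair is primal-dual optimal, the complementary slackness condition \eqref{eq: complementary condition-q1} gives $M(q_1) \bigcdot \hat{X} = 0$. Second, because $\rank(\hat{X}) = 2$ and $M(q_1) \bigcdot \hat{X} = 0$, the rank-one decomposition procedure of Ye and Zhang~\cite[Lemma~2.2]{ye2003new} yields a decomposition $\hat{X} = \hat{x}_1 \hat{x}_1^T + \hat{x}_2 \hat{x}_2^T$ satisfying $M(q_1) \bigcdot \hat{x}_i \hat{x}_i^T = 0$ for $i \in \{1,2\}$. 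Neither of these steps uses $Q_1 \succ 0$.

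The heart of the argument, and the only place where $Q_1 \succ 0$ enters, is showing $M(q_1) \bigcdot \hat{x}_1 \hat{x}_2^T \neq 0$. I will argue by contradiction: assume $\hat{x}_1^T M(q_1) \hat{x}_2 = 0$. Combined with the two diagonal conditions, bilinearity then forces $v^T M(q_1) v = 0$ for every $v$ in $\mathrm{span}(\hat{x}_1, \hat{x}_2)$, which is the range of $\hat{X}$. Since $\mathrm{span}(\hat{x}_1, \hat{x}_2)$ is two-dimensional and the hyperplane $\mathcal{H} \defeq \{v \in \mathbb{R}^{n+1} : v \text{ has zero first entry}\}$ is $n$-dimensional in $\mathbb{R}^{n+1}$, the standard dimension inequality gives
\begin{equation*}
\dim\bigl(\mathrm{span}(\hat{x}_1, \hat{x}_2) \cap \mathcal{H}\bigr) \geq 2 + n - (n+1) = 1.
\end{equation*}
Hence there exists a nonzero $v = \bigl[\begin{smallmatrix} 0 \\ z \end{smallmatrix}\bigr]$ in $\mathrm{span}(\hat{x}_1, \hat{x}_2)$ with $z \neq 0$. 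For this $v$, using the block structure of $M(q_1)$ from~\eqref{eq: problem data},
\begin{equation*}
v^T M(q_1) v = z^T Q_1 z > 0,
\end{equation*}
where the strict inequality follows from $Q_1 \succ 0$ and $z \neq 0$. This contradicts the assumption and completes the proof.

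The step I expect to require the most care is the existence of the decomposition in the second step above. For the $\rank(\hat{X}) = 2$ case, however, this is essentially elementary: starting from any initial decomposition $\hat{X} = v_1 v_1^T + v_2 v_2^T$, a one-parameter family of rotations $\hat{x}_1(\theta), \hat{x}_2(\theta)$ satisfies $\hat{x}_i(\theta)^T M(q_1) \hat{x}_i(\theta) = 0$ for a suitable $\theta$, because the two diagonal values sum to zero (they equal $M(q_1) \bigcdot \hat{X}$) and are continuous functions of $\theta$, so an intermediate value argument produces the required cancellation. Beyond this bookkeeping, the essence of the proof is the geometric fact that a two-dimensional subspace of $\mathbb{R}^{n+1}$ cannot be totally isotropic for $M(q_1)$ when its trailing $n \times n$ block $Q_1$ is positive definite, because such a subspace must meet $\mathcal{H}$ nontrivially by dimension count.
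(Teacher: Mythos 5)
Your proof is correct, and the first two steps (complementary slackness giving $M(q_1)\bigcdot\hat{X}=0$, then \cite[Lemma~2.2]{ye2003new} to obtain a decomposition with $M(q_1)\bigcdot\hat{x}_i\hat{x}_i^T=0$) coincide with the paper's. Where you genuinely diverge is the key step showing $M(q_1)\bigcdot\hat{x}_1\hat{x}_2^T\neq 0$. The paper performs an explicit congruence $\tilde{x}=Rx$ with
\begin{equation*}
R = \begin{bmatrix}
(b_1^T Q_1^{-1} b_1-c_1)^{1/2} & 0_{1 \times n} \\
Q_1^{-1/2}b_1 & Q_1^{1/2}
\end{bmatrix}
\end{equation*}
to normalize $M(q_1)$ to $\mathrm{diag}(-1,I_n)$, after which the three vanishing conditions become $\hat{z}_1^T\hat{z}_1=\hat{t}_1^2$, $\hat{z}_2^T\hat{z}_2=\hat{t}_2^2$, $\hat{z}_1^T\hat{z}_2=\hat{t}_1\hat{t}_2$, and equality in Cauchy--Schwarz forces $\hat{x}_1,\hat{x}_2$ to be linearly dependent, contradicting $\rank(\hat{X})=2$. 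Your argument instead observes that the three vanishing conditions make $\mathrm{span}(\hat{x}_1,\hat{x}_2)$ totally isotropic for $M(q_1)$, and that any two-dimensional subspace of $\mathbb{R}^{n+1}$ must meet the $n$-dimensional hyperplane of vectors with zero first entry in a nonzero vector $v=\bigl[\begin{smallmatrix}0\\z\end{smallmatrix}\bigr]$, for which $v^TM(q_1)v=z^TQ_1z>0$. Both proofs are sound, but yours buys two things: it is coordinate-free (no normalization or Cauchy--Schwarz needed), and it sidesteps the implicit requirement that $b_1^TQ_1^{-1}b_1-c_1>0$, which the paper's $R$ needs in order to be a real invertible matrix (this is guaranteed here by Slater's condition, but your route does not have to invoke it). Your closing intermediate-value sketch correctly captures why the rank-two case of the decomposition lemma is elementary.
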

	
	\begin{proof}
		By complimentary slackness \eqref{eq: complementary condition-q1}, $\hat{y}_1 \neq 0$ implies $M(q_1) \bigcdot \hat{X} = 0$. By \cite[Lemma~2.2]{ye2003new}, we can always obtain a rank-one decomposition $\hat{X} = \hat{x}_1 \hat{x}_1^T + \hat{x}_2 \hat{x}_2^T$ that satisfies 
		\begin{equation}\label{eq: intermediate equality}
		M(q_1) \bigcdot \hat{x}_1 \hat{x}_1^T = M(q_1) \bigcdot \hat{x}_2 \hat{x}_2^T = 0.	
		\end{equation}
		
		We proceed to show that the pair $(\hat{x}_1,\hat{x}_2)$ obtained from a rank-one decomposition of $\hat{X}$ that satisfies \eqref{eq: intermediate equality} also yields $M(q_1) \bigcdot \hat{x}_1 \hat{x}_2^T \neq 0$. We can conduct a change of coordinates $\tilde{x} = Rx$ to make
		\begin{equation}\label{eq: M(q_1) after linear transformation}
		M(q_1) = \begin{bmatrix}
		-1 & 0_{1 \times n} \\
		0_{n \times 1} & I_n
		\end{bmatrix}
		\end{equation}
		because
		\begin{eqnarray}
		\tilde{q}_1(\tilde{x}) & \defeq & \tilde{x}^T M(q_1) \tilde{x} \nonumber\\	
		&=& \begin{bmatrix}
		\tilde{t} \\
		\tilde{z}
		\end{bmatrix}^T \begin{bmatrix}
		-1 & 0_{1 \times n}\\
		0_{n \times 1} & I_n
		\end{bmatrix} \begin{bmatrix}
		\tilde{t}\\
		\tilde{z}
		\end{bmatrix} \nonumber\\
		& = &\begin{bmatrix}
		t\\
		z
		\end{bmatrix}^T  R^T \begin{bmatrix}
		-1 & 0_{1 \times n}\\
		0_{n \times 1} & I_n
		\end{bmatrix} R \begin{bmatrix}
		t \\
		z
		\end{bmatrix} \nonumber \\
		& = & z^T Q_1 z + 2 t b_1^T Q_1z + t^2c_1 = q_1(x),
		\end{eqnarray}
		where
		\begin{equation}
		R \defeq \begin{bmatrix}
		(b_1^T Q_1^{-1} b_1-c_1)^{1/2} & 0_{1 \times n} \\
		Q_1^{-1/2}b_1 & Q_1^{1/2}
		\end{bmatrix},    
		\end{equation}
		and $\bigl[ \begin{smallmatrix}
		{t} \\ {z}
		\end{smallmatrix} \bigr]$ and $\bigl[ \begin{smallmatrix}
		\tilde{t} \\ \tilde{z}
		\end{smallmatrix} \bigr]$ are partitions of $x$ and $\tilde{x}$, respectively, in the form of \eqref{eq: partition of x}. 
		
		By contradiction, assume $M(q_1) \bigcdot \hat{x}_1\hat{x}_2^T = 0$. Hence, adopting the partition of $\hat{x}_1$ and $\hat{x}_2$ in the form of \eqref{eq: partition of x} and using the $M(q_1)$ in \eqref{eq: M(q_1) after linear transformation}, we have 
		\begin{equation}
		\left\{
		\begin{array}{ll}
		M(q_1) \bigcdot \hat{x}_1\hat{x}_1^T = 0,\\
		M(q_1) \bigcdot \hat{x}_2\hat{x}_2^T = 0,\\
		M(q_1) \bigcdot \hat{x}_1\hat{x}_2^T = 0,
		\end{array}
		\right. \Rightarrow 
		\left\{
		\begin{array}{ll}
		\hat{z}_1^T \hat{z}_1 = \hat{t}_1^2,\\
		\hat{z}_2^T \hat{z}_2 = \hat{t}_2^2,\\
		\hat{z}_1^T \hat{z}_2 = \hat{t}_1\hat{t}_2,
		\end{array}
		\right. \Rightarrow (\hat{z}_1^T \hat{z}_1)(\hat{z}_2^T \hat{z}_2) = (\hat{z}_1^T \hat{z}_2)^2.
		\end{equation}
		By Cauchy--Schwarz inequality, the last equality implies $\hat{z}_1$ and $\hat{z}_2$ are linearly dependent, and so are $\hat{x}_1$ and $\hat{x}_2$, which contradicts the fact that $\rank(\hat{X}) = 2$ and $\hat{X} = \hat{x}_1\hat{x}_1^T + \hat{x}_2 \hat{x}_2^T$.
	\end{proof}
	
	\section{{\firstRevisionColor Conclusions and future directions}\label{sec: conclusions and future work}}
	{\firstRevisionColor In this article, we establish in~\Cref{thm: necessary and sufficient condition for strong duality} that a general QC2QP has an optimality gap if and only if it satisfies a condition we denote as \textit{Property~I${}^+$}, on account of the earlier test based on the so-called \textit{Property~I}~\cite{ai2009strong}. According to our result, there is no optimality gap in the desirable case in which \textit{Property~I${}^+$} is violated, and a solution of the QC2QP can be obtained from a rank-one decomposition of a solution of a semidefinite relaxation. Our result generalizes the state of the art stated in~\Cref{thm: Wenbao's theorem} (reproduced from~\cite[Theorem~4.2]{ai2009strong}), which uses \textit{Property~I} to establish a similar test subject to the additional requirement that at least one of the constraints is strictly convex. In contrast to \Cref{thm: Wenbao's theorem}, \Cref{thm: necessary and sufficient condition for strong duality} remains valid when none of the inequalities of the QC2QP is strictly convex, but they are equivalent otherwise. 
	}
	
	{\firstRevisionColor Future directions include tightening and eliminating the optimality gap of a QC2QP that is tested affirmative of \textit{Property~I${}^+$}. Here, one could possibly leverage existing methods that tighten or eliminate the gap for special cases of QC2QP such as, for example, those based on lifting~\cite{Yang2016Twovariable}, adding SOC constraints~\cite{yuan2017new}, or Lagrange multipliers~\cite{Sakaue2016Solving}. Another interesting possibility is to establish an $\epsilon$-version of \Cref{thm: necessary and sufficient condition for strong duality} which would bound the optimality gap by a function of $\epsilon$, when \textit{Property~I${}^+$} is violated by $\epsilon$.
	}
	
	\section{Appendix: Proof of~\Cref{thm: necessary and sufficient condition for strong duality}}\label{sec: proof}
	In order to prove \Cref{thm: necessary and sufficient condition for strong duality}, we shall use the following lemma whose proof follows from that of the sufficient condition of \cite[Theorem~2.4]{lemon2016low}. {\firstRevisionColor As is evident from the analysis in \cite[p.~19]{lemon2016low}, this result does not rely on the assumption \cite[(2.1)]{lemon2016low} required in other portions of \cite[Chapter~2]{lemon2016low}.
		
		\begin{lemma} \label{lemma: SDP unique solution}
			Let $X = VV^T$ be a solution of \eqref{prob: semidefinite relaxation}, where $V$ is a matrix in $\mathbb{R}^{n \times r}$ and $r = \rank(X)$. Define a linear mapping $\mathcal{A}_V:\mathcal{S}^r \rightarrow \mathbb{R}^3$ as
			\begin{equation}\label{eq: range space A V}
			\mathcal{A}_V(\Delta) = \begin{bmatrix}
			(V^T M(q_1)V) \bigcdot \Delta \\
			(V^T M(q_2)V) \bigcdot \Delta \\
			(V^T I_{00} V) \bigcdot \Delta \\
			\end{bmatrix},
			\end{equation} and consider the following conditions:
			\begin{itemize}
				\item[] {\bf (Lemma~\ref{lemma: SDP unique solution}-C1)} All solutions $\tilde{X}$ of \eqref{prob: semidefinite relaxation} have rank $\tilde{r} \leq r$.
				\item[] {\bf (Lemma~\ref{lemma: SDP unique solution}-C2)} $\mathcal{N}(\mathcal{A}_V) = \{0_{r \times r}\}$
			\end{itemize}
			\vspace{.05in}
			If conditions (Lemma~\ref{lemma: SDP unique solution}-C1) and (Lemma~\ref{lemma: SDP unique solution}-C2) hold then $X$ is the unique solution of~\eqref{prob: semidefinite relaxation}.	
	\end{lemma}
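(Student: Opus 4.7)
My plan is to proceed by contradiction: assume there exists an optimal solution $\tilde{X} \neq X$ of~\eqref{prob: semidefinite relaxation}, and derive a contradiction with condition (Lemma~\ref{lemma: SDP unique solution}-C2).

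First, I would establish a range containment by invoking (Lemma~\ref{lemma: SDP unique solution}-C1). For each $\lambda \in (0,1)$, the convex combination $X_\lambda \defeq (1-\lambda) X + \lambda \tilde{X}$ is feasible (by convexity of the feasible set) and optimal (by linearity of the objective), so $\rank(X_\lambda) \leq r$ by (Lemma~\ref{lemma: SDP unique solution}-C1). Since $\mathrm{range}(X_\lambda) = \mathrm{range}(X) + \mathrm{range}(\tilde{X})$ for such $\lambda$ and $\rank(X) = r$, it follows that $\mathrm{range}(\tilde{X}) \subseteq \mathrm{range}(V)$. Because $V$ has full column rank, I may then write $\tilde{X} = V \tilde{Y} V^T$ for a unique $\tilde{Y} \in \mathcal{S}^r$ with $\tilde{Y} \succeq 0$. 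Setting $\Delta \defeq \tilde{Y} - I_r \in \mathcal{S}^r$ yields $W \defeq \tilde{X} - X = V \Delta V^T$ with $\Delta \neq 0$.

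Next, I would show $\mathcal{A}_V(\Delta) = 0$, as (Lemma~\ref{lemma: SDP unique solution}-C2) will then force $\Delta = 0$, contradicting $\Delta \neq 0$. The third coordinate is immediate: $(V^T I_{00} V) \bigcdot \Delta = I_{00} \bigcdot W = 1 - 1 = 0$. For the first two coordinates, namely $M(q_i) \bigcdot W = 0$ with $i \in \{1,2\}$, I would use the complementarity relations~\eqref{eq: complementary condition} for some dual optimal $(\hat{Z}, \hat{y}_0, \hat{y}_1, \hat{y}_2)$: the matrix relation $\hat{Z} X = \hat{Z} \tilde{X} = 0$, which holds because $\hat{Z}, X, \tilde{X} \succeq 0$ together with the scalar complementarity $\hat{Z} \bigcdot X = \hat{Z} \bigcdot \tilde{X} = 0$, gives $\hat{Z} \bigcdot W = 0$, and $\hat{y}_i M(q_i) \bigcdot X = \hat{y}_i M(q_i) \bigcdot \tilde{X} = 0$ gives $\hat{y}_i M(q_i) \bigcdot W = 0$. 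For each index $i$ with $\hat{y}_i > 0$, this already yields $M(q_i) \bigcdot W = 0$. For each index $i$ with $\hat{y}_i = 0$, I would argue further by analyzing the affine family $X + \mu W = V(I_r + \mu \Delta) V^T$, which has rank at most $r$ throughout its PSD feasibility interval and is optimal-feasible on a nontrivial subinterval; invoking (Lemma~\ref{lemma: SDP unique solution}-C1) at the boundary values $\mu_{\min}, \mu_{\max}$, at which either PSD-ness fails or an inequality constraint becomes tight, forces the remaining $M(q_i) \bigcdot W$ to vanish as well.

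The main obstacle is precisely this last step handling dually inactive indices, where complementarity alone is not enough. The cleanest resolution is to follow \cite[Theorem~2.4]{lemon2016low}: condition (Lemma~\ref{lemma: SDP unique solution}-C2) says that $\mathcal{A}_V$ is injective from $\mathcal{S}^r$ to $\mathbb{R}^3$, so $r(r+1)/2 \leq 3$ and hence $r \leq 2$, which makes the analysis along $\Delta$ finite-dimensional and tractable. Once $\mathcal{A}_V(\Delta) = 0$ is established, (Lemma~\ref{lemma: SDP unique solution}-C2) yields $\Delta = 0$, so $W = 0$ and $\tilde{X} = X$, contradicting the initial assumption and completing the proof.
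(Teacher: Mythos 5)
Your overall skeleton is the right one and is essentially the argument the paper imports from the sufficiency proof of \cite[Theorem~2.4]{lemon2016low}: use (Lemma~\ref{lemma: SDP unique solution}-C1) together with the identity $\mathrm{range}((1-\lambda)X+\lambda\tilde{X})=\mathrm{range}(X)+\mathrm{range}(\tilde{X})$ to force $\tilde{X}=V\tilde{Y}V^T$, set $\Delta=\tilde{Y}-I_r\neq 0$, show $\mathcal{A}_V(\Delta)=0$, and contradict (Lemma~\ref{lemma: SDP unique solution}-C2). The range-containment step and the third coordinate of $\mathcal{A}_V(\Delta)$ are handled correctly. The problem is the step you yourself flag: establishing $M(q_i)\bigcdot(\tilde{X}-X)=0$ for an index $i$ whose dual multiplier $\hat{y}_i$ vanishes. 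Neither of your two proposed escapes closes it. The dimension count $r(r+1)/2\leq 3$, hence $r\leq 2$, is a correct consequence of (Lemma~\ref{lemma: SDP unique solution}-C2) but says nothing about the value of $M(q_i)\bigcdot W$. The ``boundary analysis'' of the family $X+\mu W=V(I_r+\mu\Delta)V^T$ is vacuous: every member of that family automatically has rank at most $r$ because $I_r+\mu\Delta\in\mathcal{S}^r$, so invoking (Lemma~\ref{lemma: SDP unique solution}-C1) along it yields no new information, and an inequality constraint ``becoming tight'' at a finite endpoint $\mu_{\min}$ or $\mu_{\max}$ is evidence that $M(q_i)\bigcdot W\neq 0$, not that it vanishes.

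The missing ingredient does not follow from (Lemma~\ref{lemma: SDP unique solution}-C1) and (Lemma~\ref{lemma: SDP unique solution}-C2) alone: nothing in those two conditions rules out an optimal face of \eqref{prob: semidefinite relaxation} that is a nontrivial segment $\{X+\mu W\}$ along which one inequality constraint is strictly slack and varies, in which case $\Delta\notin\mathcal{N}(\mathcal{A}_V)$ and your intended contradiction never materializes. What rescues the argument in the only place the paper uses this lemma is that it is invoked under \textit{Property~I$^+$}: condition (\textit{I.1}) gives $\hat{y}_1\hat{y}_2\neq 0$, and complementary slackness \eqref{eq: complementary condition-q1}--\eqref{eq: complementary condition-q2} then forces $M(q_1)\bigcdot\tilde{X}=M(q_2)\bigcdot\tilde{X}=0$ for \emph{every} optimal $\tilde{X}$, so both inequalities act as equalities on the whole optimal face and the equality-constraint argument of \cite{lemon2016low} applies verbatim. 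To make your proof airtight you should either restrict to the regime in which both multipliers are positive (which is the regime in which the lemma is applied) or add the hypothesis that the two inequality constraints are active at every optimal solution; as written, the case $\hat{y}_i=0$ is a genuine gap.
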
 }
	\vspace{.1in}
	
	
	\begin{proof}[Proof of \Cref{thm: necessary and sufficient condition for strong duality}]
		
		We will start by proving sufficiency, and later prove necessity.
		
		\noindent \textbf{(Proof of the sufficiency portion of \Cref{thm: necessary and sufficient condition for strong duality})}: \\
		In proving sufficiency, we start by assuming that $\hat{X}$ and $(\hat{Z},\hat{y}_0,\hat{y}_1,\hat{y}_2)$ is a pair of optimal solutions for the semidefinite relaxation \eqref{prob: semidefinite relaxation} and dual problem \eqref{prob: dual problem}, respectively, and that they satisfy \textit{Property~I$^+$}.
		\vspace{.1in} \\
		\noindent \underline{Main idea:} The main tenet of our proof of sufficiency is to establish the fact that $\hat{X}$ is a unique solution of \eqref{prob: semidefinite relaxation}. Notice that since $\hat{X}$ satisfies \textit{Property~I$^+$}, and hence has rank $2$, the aforementioned fact implies that \eqref{prob: semidefinite relaxation} has no rank-one optimal solution, in which case an optimality gap exists. Indeed, if there was no optimality gap then one could construct a rank-one optimal solution for \eqref{prob: semidefinite relaxation} from that of~\eqref{prob: original  problem}.
		\vspace{.1in} \\ {\firstRevisionColor
			\noindent \underline{Proving uniqueness of $\hat{X}$:} We prove uniqueness by establishing that the conditions of Lemma~\ref{lemma: SDP unique solution} hold for $r=2$.} \\
		
		\noindent \underline{Establishing (Lemma~7.1-C1):} 
		Let $\tilde{X}$ denote any optimal solution of \eqref{prob: semidefinite relaxation}. Using Sylvester's inequality and the complementary condition \eqref{eq: complementary condition-XZ} we conclude that the following holds: 
		\begin{equation}\label{eq: max rank of SP is 2}
		\rank(\tilde{X}) + \rank(\hat{Z}) - (n+1) \leq \rank(\tilde{X}\hat{Z}) = 0 \Rightarrow \rank(\tilde{X}) \leq 2,
		\end{equation}
		that is, the maximum rank of an optimal solution of \eqref{prob: semidefinite relaxation} is 2.
		\vspace{.1in} \\
		\noindent \underline{Establishing (Lemma~7.1-C2):} {\SecondRevisionColor We use the fact that (\textit{I$^+$.4}) is assumed to hold to select a rank-one decomposition based on $(\hat{x}_1,\hat{x}_2)$ satisfying (\ref{eq: equality on the first constraint})--(\ref{eq:I+4Cond}). Using this choice, we form $V$ such that $\hat{X}=VV^T$ and we also construct a matrix $\Delta$ as follows:}
		\begin{equation}\label{eq: a partition of V x1 x2 and Delta}
		V = \begin{bmatrix}
		\hat{x}_1 &  \hat{x}_2
		\end{bmatrix},\quad \hat{x}_1 = \begin{bmatrix}
		\hat{t}_1 \\
		\hat{z}_1
		\end{bmatrix}, \quad
		\hat{x}_2 = \begin{bmatrix}
		\hat{t}_2 \\
		\hat{z}_2
		\end{bmatrix},\quad \Delta = \begin{bmatrix}
		\Delta_1 & \Delta_2 \\
		\Delta_2 & \Delta_3
		\end{bmatrix},
		\end{equation}
		where $\hat{z}_1$ and $\hat{z}_2$ are $n$-dimensional real vectors and $\hat{t}_1,\hat{t}_2,\Delta_1,\Delta_2$, and $\Delta_3$ are real numbers.
		Next, we show that $\mathcal{N}(\mathcal{A}_V) = \{0_{2 \times 2}\}$ holds. Equivalently, we need to show that the only solution $\Delta$ in $\mathcal{S}^2$ for the following system of equations is $\Delta = 0_{2 \times 2}$:
		\begin{equation} \label{eq: linear equation to solve}
		\begin{bmatrix}
		(V^T M(q_1)V) \bigcdot \Delta \\
		(V^T M(q_2)V)\bigcdot \Delta \\
		(V^T I_{00} V) \bigcdot \Delta \\
		\end{bmatrix}=0_{3 \times 1}.
		\end{equation}
		{\SecondRevisionColor From the fact that (\textit{I.1}) is true, we invoke~$\hat{y}_1\hat{y}_2 \neq 0$ in connection with~(\ref{eq: complementary condition-q2}) to conclude that the following holds:}
		\begin{equation}
		M(q_2)\bigcdot \hat{X} = M(q_2) \bigcdot \hat{x}_1\hat{x}_1^T + M(q_2) \bigcdot \hat{x}_2 \hat{x}_2^T = 0. 
		\end{equation}
		We proceed to define $\alpha$ as follows: 
		\begin{equation}
		\alpha := M(q_2) \bigcdot \hat{x}_1 \hat{x}_1^T = -M(q_2) \bigcdot \hat{x}_2\hat{x}_2^T
		\end{equation}
		and
		\begin{equation*}
		\Gamma \defeq \begin{bmatrix}
		M(q_1) \bigcdot \hat{x}_1 \hat{x}_1^T & 2 M(q_1) \bigcdot \hat{x}_1\hat{x}_2^T & M(q_1) \bigcdot \hat{x}_2 \hat{x}_2^T \\
		\alpha & 2 M(q_2) \bigcdot \hat{x}_1\hat{x}_2^T & -\alpha \\
		\hat{t}_1^2 & 2 \hat{t}_1 \hat{t}_2 & \hat{t}_2^2
		\end{bmatrix} \overset{(a)}{=} \begin{bmatrix}
		0 & 2 M(q_1) \bigcdot \hat{x}_1\hat{x}_2^T & 0 \\
		\alpha & 2 M(q_2) \bigcdot \hat{x}_1\hat{x}_2^T & -\alpha \\
		\hat{t}_1^2 & 2 \hat{t}_1 \hat{t}_2 & \hat{t}_2^2
		\end{bmatrix},
		\end{equation*}
		where $(a)$ above follows from~\eqref{eq: equality on the first constraint}.
		We proceed by using $\Gamma$ to express~\eqref{eq: linear equation to solve} as a system of linear equalities for $\Delta_1,\Delta_2$ and $\Delta_3$:
		\begin{equation}\label{eq: linear equation of Delta1 2 3}
		\begin{bmatrix}
		V^T M(q_1)V \bigcdot \Delta \\
		V^T M(q_2)V \bigcdot \Delta \\
		V^T I_{00} V \bigcdot \Delta \\
		\end{bmatrix}=0_{3 \times 1} \Longleftrightarrow
		\Gamma \begin{bmatrix}
		\Delta_1 \\
		\Delta_2 \\
		\Delta_3
		\end{bmatrix} = 0_{3 \times 1}.
		\end{equation}
		Notice that $I_{00}\bigcdot \hat{X} = 1$ implies $\hat{t}_1^2+\hat{t}_2^2 = 1$, which causes the matrix $\Gamma$ to be full rank because
		\begin{equation}
		\det(\Gamma) = -(2M(q_1)\bigcdot \hat{x}_1\hat{x}_2^T)(\alpha (\hat{t}_1^2 +\hat{t}_2^2)) \neq 0.
		\end{equation} {\SecondRevisionColor Here, we also used the fact that $(\hat{x}_1,\hat{x}_2)$ satisfying~(\ref{eq: inequality on the second constraint}) and~(\ref{eq:I+4Cond}) implies that $\alpha \neq 0$ and $M(q_1)\bigcdot \hat{x}_1\hat{x}_2^T \neq 0$, respectively.}
		So $\Delta_1,\Delta_2$, and $\Delta_3$ are all zero and the only solution of \eqref{eq: linear equation to solve} is $\Delta = 0_{2 \times 2}$.
		\vspace{.1in} \\
		\noindent \underline{This concludes our proof of the sufficient condition.} 
		\vspace{.1in} \\
		

		\tikzstyle{startstop} = [rectangle, rounded corners, minimum width=3cm, minimum height=1cm,text centered, draw=black, fill=red!30]
		\tikzstyle{io} = [trapezium, trapezium left angle=70, trapezium right angle=110, minimum width=3cm, minimum height=0.5cm, text centered, draw=black, thick, fill=blue!7]
		\tikzstyle{case} = [rectangle, minimum width=1.5cm, minimum height=0.5cm, text centered, draw=black, thick, fill=yellow!7]
		\tikzstyle{property} = [rectangle, minimum width=1.5cm, minimum height=0.5cm, text centered, draw=black, thick, fill=orange!7]
		\tikzstyle{decision} = [diamond, text centered, draw=black,thick, fill=green!7]
		\tikzstyle{arrow} = [->,>=stealth, thick]
		
		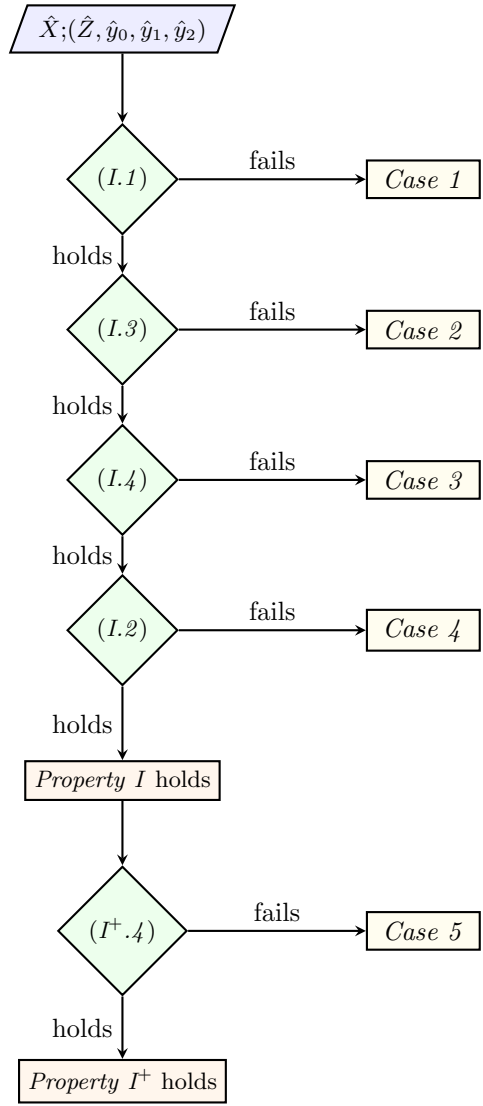
\begin{figure}[h!]
			\begin{center}
				\begin{tikzpicture}[node distance=2cm]
				\node (data) [io] {\small $\hat{X}$;$(\hat{Z},\hat{y}_0,\hat{y}_1,\hat{y}_2)$};
				\node (I1) [decision, below of=data] {\small (\textit{I.1})};
				\node (Case1) [case, right of=I1, xshift=2cm]{\textit{Case 1}};
				
				\node (I3) [decision, below of=I1] {\small (\textit{I.3})};
				\node (Case2) [case, right of=I3, xshift=2cm]{\textit{Case 2}};
				
				\node (I4) [decision, below of=I3] { \small (\textit{I.4})};
				\node (Case3) [case, right of=I4, xshift=2cm]{\textit{Case 3}};
				
				\node (I2) [decision, below of=I4] { \small (\textit{I.2})};
				\node (Case4) [case, right of=I2, xshift=2cm]{\textit{Case 4}};
				
				\node (PropertyI) [property, below of=I2] { \small \textit{Property I} holds};
				
				\node (I4p) [decision, below of=PropertyI] { \small (\textit{I$^+$.4})};
				\node (Case5) [case, right of=I4p, xshift=2cm]{\textit{Case 5}};
				
				\node (PropertyIp) [property, below of=I4p] {\small \textit{Property I$^+$} holds};
				
				\draw [arrow](data) -- (I1);
				\draw [arrow](I1) -- node[anchor=south] {fails} (Case1);
				\draw [arrow](I1) -- node[anchor=east] {holds} (I3);
				
				\draw [arrow](I3) -- node[anchor=south] {fails} (Case2);
				\draw [arrow](I3) -- node[anchor=east] {holds} (I4);
				
				\draw [arrow](I4) -- node[anchor=south] {fails} (Case3);
				\draw [arrow](I4) -- node[anchor=east] {holds} (I2);
				
				\draw [arrow](I2) -- node[anchor=south] {fails} (Case4);
				\draw [arrow](I2) -- node[anchor=east] {holds} (PropertyI);
				
				\draw [arrow](PropertyI) -- (I4p);
				
				\draw [arrow](I4p) -- node[anchor=south] {fails} (Case5);
				\draw [arrow](I4p) -- node[anchor=east] {holds} (PropertyIp);
				
				\end{tikzpicture}
			\end{center}
			\caption{Flowchart describing the cases in the proof of the necessary condition of Theorem~\ref{thm: necessary and sufficient condition for strong duality}.}
			\label{fig:flowchart}
		\end{figure}
		\vspace{.1 in}

		\noindent \textbf{(Proof of the necessity portion of \Cref{thm: necessary and sufficient condition for strong duality})}: \\ 
		In proving necessity, we start by assuming that $\hat{X}$ and $(\hat{Z},\hat{y}_0,\hat{y}_1,\hat{y}_2)$ is a pair of optimal solutions for~\eqref{prob: semidefinite relaxation} and~\eqref{prob: dual problem}, respectively, and that the pair \underline{violates} \textit{Property~I$^+$}.
		
		We proceed to prove in a constructive way that there is no optimality gap. Specifically, we will examine five exhaustive 
		{\firstRevisionColor and mutually exclusive} cases for which \textit{Property~I$^+$} is violated. We will outline in each of these cases, denoted by \textit{Case $1$} through \textit{Case $5$}, how to construct a rank-one solution for~\eqref{prob: semidefinite relaxation} (and hence also~\eqref{prob: original  problem}), thus showing that there is no optimality gap. {\SecondRevisionColor We shall identify each case using the labels ({\it I.1})--({\it I.4}) and ({\it I$^+$.4}) of the conditions specifying \textit{Property~I} and \textit{Property~I$^{+}$} in Definitions~\ref{def: Wenbao's Property I} and~\ref{def:PropertyI+}, respectively. The flowchart in Fig.~\ref{fig:flowchart} summarizes how we identify the cases in terms of the condition labels.}
		
		\vspace{.1 in}
		{\SecondRevisionColor \noindent {\bf(}\underline{\textit{Case 1}}: (\textit{I.1}) fails.{\bf)} }
		
		In this case, $\hat{y}_1\hat{y}_2 = 0$. 
		{\firstRevisionColor We proceed by assuming, without loss of generality, that $\hat{y}_2 = 0$, as the analysis for $\hat{y}_1 = 0$ would be analogous. Since $\hat{y}_2 = 0$, the complementary slackness equality~\eqref{eq: complementary condition-q2} indicates that $\hat{X}$ is only required to satisfy the following feasibility constraint of~\eqref{prob: semidefinite relaxation}:
			\begin{equation} \label{eq:Case-1-Mq2-ineq}
			M(q_2) \bigcdot \hat{X} \leq 0.
			\end{equation}
			If $\hat{y}_1>0$ holds then the complementary slackness condition~\eqref{eq: complementary condition-q1} indicates that $\hat{X}$ satisfies the corresponding feasibility constraint in~\eqref{prob: semidefinite relaxation} with equality, and the case in which~$\hat{y}_1=0$ causes~\eqref{eq: complementary condition-q1} to be satisfied for any feasible solution:
			\begin{equation}
			\left\{
			\begin{aligned}
			M(q_1) \bigcdot \hat{X} \leq 0, \quad \text{if } \hat{y}_1=0 \\
			M(q_1) \bigcdot \hat{X} = 0, \quad \text{if } \hat{y}_1>0
			\end{aligned} \right..
			\end{equation}
			Subsequently, we follow the procedure in the proof of~\cite[Lemma~2.2]{ye2003new} to compute a rank-one decomposition $\hat{X} = \hat{x}_1\hat{x}_1^T + \hat{x}_2\hat{x}_2^T + \dots +\hat{x}_r\hat{x}_r^T$, with $r \defeq \rank(\hat{X})$,
			such that the following holds:
			\begin{equation}\label{eq: two situations of y1 in case 1}
			\left\{
			\begin{aligned}
			M(q_1) \bigcdot \hat{x}_i \hat{x}_i^T \leq 0, \quad \text{if } \hat{y}_1=0 \\
			M(q_1) \bigcdot \hat{x}_i \hat{x}_i^T = 0, \quad \text{if } \hat{y}_1>0
			\end{aligned} \right.
			,\quad  i \in \{1,2, \dots, r \}.
			\end{equation} 
			Now, notice that~\eqref{eq:Case-1-Mq2-ineq} implies that there exists $i$ in $\{1,2, \dots, r \}$ for which $M(q_2) \bigcdot \hat{x}_i \hat{x}_i^T \leq 0$ holds. Hence, we can order the indices of $\{\hat{x}_1, \ldots, \hat{x}_r \}$ so that $\hat{x}_1$ satisfies
			\begin{equation}\label{eq: one vector feasible for second constraint}
			M(q_2) \bigcdot \hat{x}_1 \hat{x}_1^T \leq 0.
			\end{equation}
			We proceed to argue that $\hat{x}_1 \hat{x}_1^T/\hat{t}_1^2$ is an optimal solution of \eqref{prob: semidefinite relaxation}, or, equivalently, that $\hat{x}_1 / \hat{t}_1$ is an optimal solution to \eqref{prob: original  problem}, where $\hat{t}_1$ comes from the partition of $\hat{x}_1$ in \eqref{eq: partition of x}. In order to do so, we notice that $\hat{x}_1$ is in the range space of $\hat{X}$, and, hence, must be in the null space of $\hat{Z}$, which implies $\hat{x}_1 \hat{x}_1^T \hat{Z} = 0_{(n+1) \times (n+1)}$. This fact, together with \eqref{eq: two situations of y1 in case 1} and \eqref{eq: one vector feasible for second constraint}, establishes that $\hat{x}_1 \hat{x}_1^T/\hat{t}_1^2$ satisfies~\eqref{eq: complementary condition}. Hence, we can invoke the complementary slackness principle to argue that $\hat{x}_1 \hat{x}_1^T/\hat{t}_1^2$ is an optimal solution of~\eqref{prob: semidefinite relaxation}. 
			Finally, it remains to show that $\hat{t}_1 \neq 0$. We do so by showing that ${\hat{t}_1 = 0}$ would contradict \Cref{asmp: Slater for SD}, which is Slater's condition for~\eqref{prob: dual problem}. A key observation is that if $\hat{t}_1 = 0$ then $\hat{X} + \alpha \hat{x}_1 \hat{x}_1^T$ is a solution to~\eqref{prob: semidefinite relaxation} satisfying~\eqref{eq: complementary condition} for every $\alpha \geq 0$. Hence, we can use $\hat{Z} \bigcdot \hat{x}_1 \hat{x}_1^T = 0$ to invoke the complementary slackness principle to argue that $\mathbb{X}:=\{\hat{X} + \alpha \hat{x}_1 \hat{x}_1^T \ | \ \alpha \geq 0 \}$ is a set of optimal solutions for~\eqref{prob: semidefinite relaxation} when $\hat{t}_1=0$. Given that $\hat{x}_1 \neq 0$, we conclude that $\mathbb{X}$ is unbounded.
			
			However, according to \cite[Theorem 2.5]{sturm1997primal}, the unboundedness of $\mathbb{X}$ would contradict \Cref{asmp: Slater for SD}. 
		}
		
		\vspace{.1in}
		
		
		\noindent {\SecondRevisionColor {\bf(}\underline{\textit{Case 2}}: (\textit{I.1}) holds and (\textit{I.3}) fails.{\bf)} }
		
		In this case, $\hat{y}_1\hat{y}_2 \neq 0$ and $\rank(\hat{X}) \neq 2$. Notice that $r := \rank(\hat{X})$ is positive because, from~\eqref{prob: semidefinite relaxation}, we know that $I_{00} \bigcdot \hat{X} = 1$ holds. If $r=1$ then $\hat{X} = \hat{x}\hat{x}^T$ is already a rank-one optimal solution to~\eqref{prob: semidefinite relaxation}, or equivalently, $\hat{x}$ is an optimal solution to~\eqref{prob: original  problem}. 
		{\firstRevisionColor Hence, subsequently, we proceed to analyze the case in which~$r \geq 3$.  
			
			We start by invoking the complementary condition~\eqref{eq: complementary condition} to conclude that $\hat{y}_1\hat{y}_2 \neq 0$ implies:
			\begin{equation}
			M(q_1) \bigcdot \hat{X}= M(q_2) \bigcdot \hat{X}=0.
			\end{equation}
			Hence, given that $r \geq 3$, by \cite[Theorem~3.4]{ai2009strong}, one can find a rank-one decomposition ${\hat{X} = \hat{x}_1\hat{x}_1^T+\hat{x}_2\hat{x}_2^T+\dots+\hat{x}_r\hat{x}_r^T}$ such that the following holds:
			\begin{subequations}\label{eq: case 2 feasible rank-one decomposition}
				\begin{align}
				M(q_1) \bigcdot \hat{x}_i \hat{x}_i^T & = 0, \qquad i \in \{1,2,\dots,r \}, \\
				M(q_2) \bigcdot \hat{x}_j \hat{x}_j^T & = 0, \qquad j \in \{1,2,\dots,r-2 \}.
				\end{align}
			\end{subequations}
			
			We proceed to argue that $\hat{x}_1 \hat{x}_1^T/\hat{t}_1^2$ is an optimal solution of \eqref{prob: semidefinite relaxation}, or, equivalently, that $\hat{x}_1 / \hat{t}_1$ is an optimal solution to \eqref{prob: original  problem}, where $\hat{t}_1$ comes from the partition of $\hat{x}_1$ in \eqref{eq: partition of x}. In order to do so, we notice that $\hat{x}_1$ is in the range space of $\hat{X}$, and, hence, must be in the null space of $\hat{Z}$, which implies $\hat{x}_1 \hat{x}_1^T \hat{Z} = 0_{(n+1) \times (n+1)}$. This fact, together with~\eqref{eq: case 2 feasible rank-one decomposition}, establishes that $\hat{x}_1 \hat{x}_1^T/\hat{t}_1^2$ satisfies~\eqref{eq: complementary condition}. Hence, we can invoke the complementary slackness principle to argue that $\hat{x}_1 \hat{x}_1^T/\hat{t}_1^2$ is an optimal solution of~\eqref{prob: semidefinite relaxation}.

			
			Finally, we proceed to prove by contradiction that $\hat{t}_1 \neq 0$. Our strategy is to show that assuming $\hat{t}_1 = 0$ would contradict Slater's condition~\eqref{eq: Slater's condition for SD 1}. 
			Hence, assuming that $\hat{t}_1 = 0$ and recalling that $\hat{x}_1^T \hat{Z} \hat{x}_1 = \Tr(\hat{x}_1 \hat{x}_1^T \hat{Z})  =0$, we can use the first constraint in~\eqref{prob: dual problem} to write:
			\begin{multline}\label{eq: contradiction 1}
			\hat{x}_1^T \Big( -\hat{y}_0 I_{00} + M(q_0) + \hat{y}_1 M(q_1) + \hat{y}_2 M(q_2)   \Big) \hat{x}_1 = \\ 
			\hat{z}_1^T(Q_0+\hat{y}_1Q_1+\hat{y}_2Q_2)\hat{z}_1 =  0,
			\end{multline}
			where $\hat{z}_1$ comes from the partition of $\hat{x}_1$ in \eqref{eq: partition of x}. On the other hand, when we substitute $\hat{t}_1=0$ in~\eqref{eq: case 2 feasible rank-one decomposition}, we get:
			\begin{equation}\label{eq: contradiction 2}
			\hat{z}_1^T Q_1 \hat{z}_1 = \hat{z}_1 ^T Q_2 \hat{z}_1 = 0.
			\end{equation}
			Combining \eqref{eq: contradiction 1} and \eqref{eq: contradiction 2}, we conclude that:
			\begin{equation}\label{eq:cond2-contradiction-3}
			\hat{z}_1^T Q_0 \hat{z}_1 = 0.
			\end{equation}
			However, \eqref{eq: contradiction 1}--\eqref{eq:cond2-contradiction-3} would imply that the following would hold for all $\bar{y}_1,\bar{y}_2 > 0$:
			\begin{equation}
			z_1^T (Q_0 + \bar{y}_1 Q_1 + \bar{y}_2 Q_2) z_1 = 0,
			\end{equation}
			which would contradict \Cref{asmp: Slater for SD}, unless $\hat{z}_1 = 0_{n \times 1}$ or, equivalently, $\hat{x}_1 = 0_{(n+1) \times 1}$ because we are assuming $\hat{t}_1=0$. However, the definition of rank-one decomposability \cite{ai2009strong} guarantees that $\hat{x}_1 \neq 0_{(n+1) \times 1}$. Hence, we reached a contradiction, from which we conclude that $\hat{t}_1 \neq 0$.
		}     
		\vspace{.1in}
		
		{\SecondRevisionColor
			\begin{remark}\label{rem:setO}  Before we proceed our analysis of the subsequent cases, we introduce the following sets:
				\begin{align} 
				\mathbb{O}(\hat{X}) &:= \{(\hat{x}_1,\hat{x}_2) \in \mathbb{R}^{n+1}\times \mathbb{R}^{n+1} \ | \text{ $\hat{X} = \hat{x}_1\hat{x}_1^T+ \hat{x}_2\hat{x}_2^T$ and (\ref{eq: equality on the first constraint}) holds}\}, \\
				\mathbb{O}^+(\hat{X}) &:= \{(\hat{x}_1,\hat{x}_2) \in \mathbb{O}(\hat{X}) \ | \text{  (\ref{eq: inequality on the second constraint}) holds}\}.
				\end{align}
				From \cite[Lemma~2.2]{ye2003new}, we conclude that when $\hat{X}$ is a solution of~\eqref{prob: semidefinite relaxation} satisfying $\rank(\hat{X}) = 2$ and $M(q_1) \bigcdot \hat{X} = 0$, it is always possible to obtain a rank-one decomposition $\hat{X} = \hat{x}_1 \hat{x}_1^T + \hat{x}_2 \hat{x}_2^T$ that complies with~\eqref{eq: equality on the first constraint}. This fact and~(\ref{eq: complementary condition-q1}) imply that $\mathbb{O}(\hat{X})$ is nonempty when $\hat{y}_1 \neq 0$ and $\rank(\hat{X})$ is $2$. Hence, we can infer that if (\textit{I.1}) and (\textit{I.3}) hold then $\mathbb{O}(\hat{X})$ is nonempty and the cases in which ({\it I.4}) fails or holds have the following further implications in the terms of $\mathbb{O}^+(\hat{X})$:
				\begin{itemize}
					\item If  (\textit{I.1}) and (\textit{I.3}) hold, and ({\it I.4}) fails, then $\mathbb{O}(\hat{X})$ is nonempty, but $\mathbb{O}^+(\hat{X})$ is empty, or, equivalently, (\ref{eq: inequality on the second constraint}) is violated for every pair $(\hat{x}_1,\hat{x}_2)$ in $\mathbb{O}(\hat{X})$. 
					\item If (\textit{I.1}), (\textit{I.3}), and ({\it I.4}) hold then $\mathbb{O}^+(\hat{X})$ is nonempty. 
				\end{itemize}
			\end{remark}
		}
		
		\vspace{.1 in} 
		\noindent 
		{\SecondRevisionColor {\bf(}\underline{\textit{Case 3}}: (\textit{I.1}) and (\textit{I.3}) hold, and ({\it I.4}) fails.{\bf)} 
			
			In this case, Remark~\ref{rem:setO} guarantees that $\mathbb{O}(\hat{X})$ is nonempty but~(\ref{eq: inequality on the second constraint}) is violated for every pair in it, which, together with~(\ref{eq: complementary condition-q2}) and the fact that $\hat{y}_2 \neq 0$, imply that: 
			\begin{equation}
			\label{eq:case3-1} M(q_2) \bigcdot \hat{x}_1\hat{x}_1^T = M(q_2) \bigcdot \hat{x}_2\hat{x}_2^T = 0, \quad (\hat{x}_1,\hat{x}_2) \in \mathbb{O}(\hat{X}).
			\end{equation}
			
			We start by selecting $(\hat{x}_1,\hat{x}_2)$ in $\mathbb{O}(\hat{X})$ and adopting the partition of $\hat{x}_1$ and $\hat{x}_2$ in \eqref{eq: a partition of V x1 x2 and Delta}.
			Consequently, $I_{00} \bigcdot \hat{X} = 1$ implies that $\hat{t}_1^2 + \hat{t}_2^2 = 1$, that is, at least one of $\hat{t}_1$ and $\hat{t}_2$ is nonzero. Without loss of generality, assume $\hat{t}_1 \neq 0$. 
			Since $\hat{x}_1$ is in the range space of $\hat{X}$, (\ref{eq: complementary condition-XZ}) implies that it must be in the null space of~$\hat{Z}$, leading to $\hat{x}_1 \hat{x}_1^T \hat{Z} = 0_{(n+1) \times (n+1)}$. Now, recall that because it is an element of $\mathbb{O}(\hat{X})$, $(\hat{x}_1,\hat{x}_2)$ satisfies $M(q_1) \bigcdot \hat{x}_1 \hat{x}_1^T=0$. Finally, using the facts that $\hat{x}_1 \hat{x}_1^T \hat{Z} = 0_{(n+1) \times (n+1)}$,  $M(q_1) \bigcdot \hat{x}_1 \hat{x}_1^T = 0$ and~(\ref{eq:case3-1}) we can invoke~(\ref{eq: complementary condition}) to conclude that $\hat{x}_1\hat{x}_1^T / \hat{t}_1^2$ is a rank-one optimal solution of \eqref{prob: semidefinite relaxation}.
			Therefore, $\hat{x}_1 / \hat{t}_1$ is an optimal solution of \eqref{prob: original  problem}. }
		\vspace{.1in}
		
		\noindent {\SecondRevisionColor {\bf(}\underline{\textit{Case 4}}: (\textit{I.1}), (\textit{I.3}), and (\textit{I.4}) hold, and (\textit{I.2}) fails{\bf)}
			
			In this case, $\hat{y}_1\hat{y}_2 \neq 0$, $\rank(\hat{Z}) \neq n-1$, $\rank(\hat{X}) = 2$, and, by Remark~\ref{rem:setO}, $\mathbb{O}^+(\hat{X})$ is nonempty. }
		
		We start by noticing that the following holds:
		\begin{align}
		\rank(\hat{Z})+\rank(\hat{X}) & \leq n+1,\\
		\rank(\hat{X}) & = 2,\\
		\rank(\hat{Z}) & \neq n-1,
		\end{align}
		which implies that the rank condition below is statisfied
		\begin{equation}
		\rank(\hat{Z})<n-1,
		\end{equation}
		indicating that the following inequality holds:
		\begin{equation}
		\rank(\hat{Z}) + \rank(\hat{X}) < n+1.
		\end{equation}
		Now $\hat{X}+ \hat{Z}$ is singular because $\rank(\hat{X}+\hat{Z}) \leq \rank(\hat{X}) + \rank(\hat{Z})$. Also, both $\hat{X}$ and $\hat{Z}$ are positive semidefinite. So there must be a nontrivial $(n+1)$-dimensional real vector $y$ in the intersection of the null space of $\hat{X}$ and the null space of $\hat{Z}$. {\SecondRevisionColor Select a pair $(\hat{x}_1,\hat{x}_2)$ in $\mathbb{O}^+(\hat{X})$ and let }
		\begin{equation}
		X \defeq \hat{X} + yy^T = \hat{x}_1\hat{x}_1^T + \hat{x}_2\hat{x}_2^T + yy^T.
		\end{equation}
		Consequently, $\rank(X) = 3$ and $X \hat{Z} = 0_{(n+1)\times (n+1)}$ because $\hat{X} \hat{Z} = 0_{(n+1)\times (n+1)}$ and $y$ is in the null space of $\hat{Z}$. By \cite[Lemma~3.3]{ai2009strong}, we know there exists an $(n+1)$-dimensional real vector $x$ such that $X$ is rank-one decomposable at $x$ and that 
		\begin{equation}\label{eq: existence of rank-one decomposable x}
		M(q_1) \bigcdot xx^T = M(q_2)\bigcdot xx^T = 0.
		\end{equation}
		
		Since $x$ is in the range space of $X$, it must be in the null space of $\hat{Z}$ which implies $xx^T \hat{Z} = 0_{(n+1) \times (n+1)}$. Therefore, by the complementary condition \eqref{eq: complementary condition-XZ}, $xx^T \hat{Z} = 0_{(n+1) \times (n+1)}$ and \eqref{eq: existence of rank-one decomposable x} imply that $xx^T/t^2$ is an optimal solution of \eqref{prob: semidefinite relaxation}, where $t$ comes from the partition of $x$ in \eqref{eq: partition of x}. Hence, $x/t$ is an optimal solution of \eqref{prob: original  problem}. Note that $t \neq 0$ follows the same argument of $\hat{t}_1 \neq 0$ in \textit{Case 2}.

		\vspace{.1in}
		
		\noindent {\SecondRevisionColor {\bf(}\underline{\textit{Case 5}}: (\textit{I.1})--(\textit{I.4}) hold and ({\it I$^+$.4}) fails.{\bf)}
			
			This is the case in which $\hat{X}$ and $(\hat{Z},\hat{y}_0,\hat{y}_1,\hat{y}_2)$ violate \textit{Property~I$^+$}, while satisfying \textit{Property~I}. Notice that there is no case comparable to this in~\cite{ai2009strong}.
			
			We start by observing that since (\textit{I.1})--(\textit{I.4}) hold by assumption, it follows that $\hat{y}_1\hat{y}_2 \neq 0$, $\rank(\hat{Z}) = n-1$, $\rank(\hat{X}) = 2$, and Remark~\ref{rem:setO} guarantees that $\mathbb{O}^+(\hat{X})$ is nonempty. Furthermore, under the stated assumption that ({\it I$^+$.4}) fails, (\ref{eq:I+4Cond}) is violated for all pairs in $\mathbb{O}^+(\hat{X})$. Hence, the following holds:
			\begin{equation}
			\label{eq:case5-1}
			M(q_1) \bigcdot \hat{x}_1\hat{x}_2^T  = 0, \quad (\hat{x}_1,\hat{x}_2) \in \mathbb{O}^+(\hat{X}).
			\end{equation}
			
			We proceed by selecting $(\hat{x}_1,\hat{x}_2)$ in $\mathbb{O}^+(\hat{X})$.  Our strategy is to use such a choice to construct another rank-one decomposition $\hat{X} = \check{x}_1\check{x}_1^T + \check{x}_2\check{x}_2^T$ satisfying the following equalities:
			\begin{equation}
			\label{eq:Case5Objective}
			M(q_1) \bigcdot \check{x}_i\check{x}_i^T = M(q_2) \bigcdot \check{x}_i \check{x}_i^T = 0,\qquad i \in \{1,2\}.
			\end{equation} 
			Because (\ref{eq:Case5Objective}) will hold by construction, $(\check{x}_1,\check{x}_2)$ will also satisfy~(\ref{eq:case3-1}). Consequently, once the construction of $(\check{x}_1,\check{x}_2)$ is complete, we can employ the method used in {\it Case~$3$} to compute an optimal solution of~\eqref{prob: original  problem}.
			
			Hence, it remains to construct a pair $(\check{x}_1,\check{x}_2)$ that satisfies~(\ref{eq:Case5Objective}). We start by noticing that~(\ref{eq:case5-1}) together with~(\ref{eq: equality on the first constraint}), which is satisfied for our choice $(\hat{x}_1,\hat{x}_2)$ in $\mathbb{O}^+(\hat{X})$, imply that the following holds:
			\begin{equation}
			M(q_1) \bigcdot (\alpha_1 \hat{x}_1 + \alpha_2 \hat{x}_2)(\alpha_1 \hat{x}_1 + \alpha_2 \hat{x}_2)^T = 0, \quad \alpha_1,\alpha_2 \in \mathbb{R}.
			\end{equation}
			
		} 
		
		The final step is to construct $\check{x}_1$ and $\check{x}_2$ as linear combinations of $\hat{x}_1$ and $\hat{x}_2$ so that the following equality holds:
		\begin{equation}
		{M(q_2) \bigcdot \check{x}_i \check{x}_i^T = 0}, \qquad i\in \{1,2\}.
		\end{equation}
		
		Since from~\eqref{eq: complementary condition-q2} and $\hat{y}_2\neq 0$ we know that $M(q_2) \bigcdot \hat{X} = 0$, we proceed to determine such $\check{x}_1$ and $\check{x}_2$ by following the procedure in the proof of~\cite[Lemma~2.2]{ye2003new}. To do so, we substitute $G$, $u_i$ and $\bar{u}_i$ present in~\cite[p.~249]{ye2003new} with $M(q_2)$, $\hat{x}_i$ and $\Check{x}_i$, respectively, for $i$ in $\{1,2\}$.
		The rank-one decomposition $\hat{X} =  \check{x}_1\check{x}_1^T + \check{x}_2\check{x}_2^T$ obtained in this way will satisfy~\eqref{eq:Case5Objective}.
		\vspace{.1in} \\
		\noindent \underline{This concludes our proof of the necessary condition.}
	\end{proof} 
	
	\vspace{.1in} 
	
	\begin{remark}
		The proof of \cite[Theorem~2.6]{yuan2017new} also shows the uniqueness of the solution of \eqref{prob: semidefinite relaxation} in the CDT subproblem when \textit{Property~I} holds. The proof uses a property of the boundary points of an SOC whereas we use a result on the uniqueness of the solution of a semidefinite program.
	\end{remark}
	
	\section*{Acknowledgments} The authors would like to thank Dr. MirSaleh Bahavarnia for reading this paper carefully and providing helpful suggestions.
	
	\bibliographystyle{siamplain}
	\bibliography{reference}
	
\end{document}